\newtheorem{thm}{Theorem}[section]
\newtheorem{lem}[thm]{Lemma}
\newtheorem{prop}[thm]{Proposition}
\newtheorem{mainthm}[thm]{Main Theorem}
\theoremstyle{definition}
\newtheorem{defin}[thm]{Definition}
\newtheorem{rem}[thm]{Remark}
\newtheorem*{xrem}{Remark}
\numberwithin{equation}{section}
\def\cA{\mathcal{A}}
\def\C{\mathbb{C}}
\def\Chat{\hat{\mathbb C}}
\def\D{\mathbb{D}}
\def\S{\mathbb{S}}
\def\Z{\mathbb{Z}}
\thanks{Research was partially supported by the ERC advanced grant ``HOLOGRAM", the Deutsche Forschungsgemeinschaft SCHL 670/4, and by the grant 346300 for IMPAN from the Simons Foundation and the matching 2015-2019 Polish MNiSW fund}
\begin{document}

\title[Newton maps and parabolic surgery]{Newton maps of complex exponential functions and parabolic surgery}

\author[K. Mamayusupov]{Khudoyor Mamayusupov}
\address{National Research University Higher School of Economics, Faculty of Mathematics, Usacheva 6, Moscow, Russia}
\address{Jacobs University Bremen\\
Campus Ring 1, 28759, Bremen, Germany}
\address{Institute of Mathematics Polish Academy of Sciences, \' Sniadeckich 8, 00-656 Warsaw, Poland}
\email{kmamayusupov@hse.ru}
\date{\today}
\begin{abstract} The paper deals with Newton maps of complex exponential functions and a surgery tool developed by P. Ha\"issinsky. The concept of \emph{``Postcritically minimal"} Newton maps of complex exponential functions are introduced, analogous to postcritically finite Newton maps of polynomials. The dynamics preserving mapping is constructed between the space of postcritically finite Newton maps of polynomials and the space of postcritically minimal Newton maps of complex exponential functions.
\end{abstract}

\subjclass[2010]{Primary 30D05, 37F10}

\keywords{Newton's method, basin of attraction, postcritically minimal, access to infinity}

\maketitle
\section{Introduction}
Let $f : \C \to \C$ be an entire function. The meromorphic function defined by $N_f(z):=z-f(z)/f'(z)$ is the {\em Newton map} of $f$; we also say that $N_f$ is obtained by applying Newton's method $f$. We define a \emph{complex exponential function} to be an entire function of the form
\begin{equation}
\label{Eq:peq}
p(z) e^{q(z)},
\end{equation}
for $p=p(z)$ and $q=q(z)$ polynomials. Newton's method applied to \eqref{Eq:peq} yields $z-\frac{p(z)}{p'(z)+p(z) q'(z)}$, which is a rational function since the exponential terms $e^q$ cancel. 

This paper deals with the family of rational functions obtained by applied Newton's method to the family of complex exponential functions \eqref{Eq:peq}. Finite fixed points of Newton maps are attracting, the roots of polynomial $p$, and the point at $\infty$ is a parabolic fixed point when $\deg (q)>0$ and a repelling fixed point otherwise.
The main goal of this paper is to lay the groundwork to the theory of all Newton maps of complex exponential functions that runs parallel to the theory of Newton maps of polynomials. 

Recall that a rational function is called postcritically finite if its critical orbits form a finite set. Every critical point in the Fatou set of a postcritically finite rational function eventually lands at superattracting periodic points. We say that a pair of distinct critical points $c_1$ and $c_2$ of a rational function $R$ are in \emph{critical orbit relations} if $R^{\circ k}(c_1)=R^{\circ l}(c_2)$ for integers $k$ and $l$.

\begin{defin}[Minimal critical orbit relations ]\label{Def:minimal-rel} Let $c_1\in U_1$ and $c_2\in U_2$ be critical points of a rational function $R$ of degree at least $2$, where $U_1$ and $U_2$ are connected components of the Fatou set of $R$. Assume $R^{\circ m}(U_1)=U_2$ with minimal $m\ge1$. We say that $c_1$ and $c_2$ are in \emph{minimal critical orbit relations} if $R^{\circ m}(c_1)=c_2$.
\end{defin}
Let a critical point $c_1\in U_1$ be captured by other critical point $c_2\in U_2$ with $R^{\circ m}(U_1)=U_2$ with minimal $m\ge1$, where $U_1$ and $U_2$ are connected components of the Fatou set of $R$. Of course $c_1$ needs at least $m$ iterates to land at $c_2$. The minimality condition requires that $c_1$ is not allowed to take more iterates than $m$ when it lands at $c_2$ for the first time. 

Let us introduce a new notion, and call it postcritically minimal, in the family of Newton maps of \emph{complex exponential functions}.

\begin{defin}[Postcritically minimal Newton map]\label{Def:PCM} A Newton map $N_{pe^{q}}$ is called postcritically minimal (PCM) if its Fatou set consists of superattracting basins and parabolic basins of $\infty$, and the following holds:
\begin{enumerate}[(a)]
\item critical orbits in the Julia set and in superattracting basins are finite;
\item every immediate basin of $\infty$ contains one (possibly with high multiplicity) critical point, and all other critical points in basin of $\infty$ are in \emph{minimal critical orbit relations} such that if $c$ is a critical point in a strictly preperiodic component $U$, of preperiod $m\ge 1$, of the basin of $\infty$, then $N^{\circ m}_{pe^{q}}(c)$ is a critical point in one of immediate basins of $\infty$. 
\end{enumerate}
\end{defin}

In the parameter space of Newton maps of polynomials (parametrized by locations of roots of polynomials) connected components of hyperbolic functions are called hyperbolic components. Every bounded hyperbolic component contains a unique ``center", which is known to be a postcritically finite function \cite{Hypcom}. Let $p$ be a polynomial with simple roots, $\deg(p)=d-n\ge 0$, and $\deg(q)=n\ge1$. Similarly, we define the \emph{stable} components of the parameter plane of Newton maps of \eqref{Eq:peq}, parametrized by coefficients of $p$ and $q$, to be connected components consisting of functions whose all critical points belong to attracting basins or parabolic basin of $\infty$. For this family of functions the point at $\infty$ is persistently parabolic with the multiplier $+1$, thus these stable functions form an open set in the parameter plane. Similar to hyperbolic components, one can observe that using a suitable surgery developed by C.~McMullen \cite{McM3} every function within a bounded stable component can be quasiconformally perturbed to a ``center'' one, which is a \emph{postcritically minimal} Newton map. This process keeps the dynamics unchanged on Julia sets. Not every postcritically minimal Newton map of \eqref{Eq:peq} is a ``center", likewise not every postcritically finite Newton map of polynomial is a ``center" of hyperbolic component in the corresponding parameter space.

The definition of postcritical minimality is a generalization of postcritically finiteness by allowing parabolic domains with minimal critical orbit relations. Finding an appropriate and compact definition for postcritical minimality is motivated by the above mentioned surgery of C.~McMullen. The postcritical minimality condition is automatically satisfied for \emph{postcritically finite} Newton maps of polynomials. In superattracting domains a captured critical point (free critical point) eventually lands at a critical cycle in minimal iterate. In contrast, in parabolic domains a captured critical point may take longer iterate (not minimal) before it lands at the critical point in one of immediate basins of a parabolic periodic point.

In Proposition~\ref{Thm:Characterization_PCM} it is shown that every component of the Fatou set of postcritically minimal Newton maps contains a unique center, defined below, and the map has normal forms: in immediate basins of parabolic fixed point the dynamics are conjugate to parabolic Blaschke products $P_k(z):=\frac{z^k+a}{1+a z^k}$, where $a=\frac{k-1}{k+1}$, and in all other Fatou components the dynamics are conjugated to power maps $z\mapsto z^k$, where $k$ is the local degree of the function at the center of the Fatou component.

\begin{defin}[Centers of Fatou components]\label{Def:Center}
Let $N_{pe^{q}}$ be a PCM Newton map and $U$ be a component of its Fatou set. A \emph{center} of $U$ is a point $\xi\in U$ such that
\begin{enumerate}[(a)]
\item when $U$ is a component of a superattracting basin, then $\xi$ is its unique critical periodic point;
\item when $U$ is an immediate basin of $\infty$, then $\xi$ is its unique critical point;
\item when $U$ is strictly preperiodic, of preperiod $m_U\ge 1$, then $N_{pe^{q}}^{\circ m_U}(\xi)$ is the center of $N_{pe^{q}}^{\circ m_U}(U)$, which is an immediate basin of a superattracting periodic point or of the basin of $\infty$.
\end{enumerate}
\end{defin}

Let $N_p$ be the postcritically finite Newton map of a polynomial $p$ with $d=\deg(p)\ge 3$. Then $\deg(N_p)=d$ and every root $\xi$ of the polynomial $p$ is simple. As a rational function of degree $d\ge3$, the Newton map $N_p$ has $2d-2$ critical points counting with multiplicities, $d$ of them are superattracting fixed points (without counting multiplicities), the roots of $p$. All other $d-2$ or less critical points have finite orbits, some are on the Julia set. Critical points on the Fatou set may form superattracting periodic points of high period, these in turn may capture some other critical points. But always, all critical orbit relations in the Fatou set are \emph{minimal} critical orbit relations.

Let $N_{pe^{q}}$ be a postcritically minimal Newton map with $\deg(N_{pe^{q}})=d\ge 3$ and $\deg(q)=n\ge 1$. Then $p$ is not constantly zero and $\deg(p)=d-n$. When $\deg(p)\ge1$ the roots of $p$ are simple and are the only superattracting fixed points of $N_{pe^{q}}$. There are $n$ immediate basins of the parabolic fixed point at infinity with exactly one critical point (possibly with high multiplicity) in each. All other $d-2$ or less critical points of $N_{pe^{q}}$ eventually land at repelling periodic points on the Julia set, thus have finite orbits. Some of the other critical points on the Fatou set may form their own superattracting periodic points of high period, and all are always in minimal critical orbit relations among themselves. The critical orbit relations could be either on the superattracting domains or on the parabolic domains at infinity. 

In \cite{Ha}, P. Ha\"issinsky developed a surgery tool to turn an attracting domain of a polynomial to a parabolic domain of other polynomial while keeping the dynamics on the Julia set unchanged. This procedure is referred to ``parabolic surgery". It takes a polynomial with an attracting fixed point, an accessible repelling fixed point on the boundary of the immediate basin of attraction, and changes this domain to a parabolic one; the repelling fixed point becomes a parabolic fixed point for the new polynomial, but the dynamics will not be changed on the Julia set. 

For the Newton map of a polynomial the basins of attraction of fixed points have a common boundary point at $\infty$, the parabolic surgery can be applied to make the point at $\infty$ a parabolic fixed point for the resulting map. After a suitable normalization, an affine conjugacy, the surgery process results to a rational function which is the Newton map of $p e^q$ for some polynomials $p$ and $q$. This operation defines a mapping from the space of all postcritically finite Newton maps of \emph{polynomials} to the space of Newton maps of \eqref{Eq:peq}. This mapping preserves dynamics, via a David homeomorphism, away from marked basins, in particular, dynamics are conjugate on the Julia sets. Moreover, the marked basins for doing surgery become parabolic basins. Marked access of marked immediate superattracting basins, in turn, become dynamical accesses in the parabolic immediate basins of the resulting map. In \cite{Ma}, we give a one-to-one correspondence between the space of postcritically minimal Newton maps of \eqref{Eq:peq} and the space of postcritically finite Newton maps of polynomials. This will be a classification of Newton maps of complex exponential functions by complementing the classification result for Newton maps of polynomials \cite{LMS1, LMS2}. Our main theorem is the following.
\begin{mainthm}[Parabolic surgery for Newton map of polynomial]
\label{Thm:Parabolic_Surgery_Newton}
Let $N_p$ be a postcritically finite Newton map of degree $d\ge 3$, and $\Delta^+_n$ be its marked channel diagram with $1\le n\le d$. For all $1\le j\le n$, let $\cA(\xi_j)$ be the marked basins of superattracting fixed points $\xi_j$. Then there exist a homeomorphism $\phi$ and a postcritically minimal Newton map $N_{\tilde p e^{\tilde q}}$ of degree $d$ with $\deg( \tilde q)=n$ such that
\begin{enumerate}[(a)]
\item $\phi\circ N_p(z)=N_{\tilde p e^{\tilde q}}\circ \phi(z)$ for all $z\not \in \bigcup_{1\leq j\leq n}\cA^{\circ}(\xi_j)$; in particular, $\phi:J(N_p)\to J(N_{\tilde p e^{\tilde q}})$ is a homeomorphism which conjugates $N_p$ to $N_{\tilde p e^{\tilde q}}$;
\item $\phi(\infty)=\infty$, and $\phi(\bigcup_{1\leq j\leq n}\cA(\xi_j))$ is the full basin of the parabolic fixed point at $\infty$ of $N_{\tilde p e^{\tilde q}}$;
\item $\phi$ is conformal in the interior of $\Chat\setminus \bigcup_{1\leq j\leq n}\cA(\xi_j)$;
\item the marked invariant accesses of the marked channel diagram $\Delta^+_n$ of $N_p$ correspond to all dynamical accesses of the parabolic basin of $\infty$ for $N_{\tilde p e^{\tilde q}}$.
\end{enumerate}
\end{mainthm}

\begin{xrem}
	In the theorem the conjugacy breaks down only on the marked immediate basins.
\end{xrem}

The class of Newton maps of complex exponential functions (for $\deg (q)\ge1$) was studied very little; in \cite{Haruta}, in a pioneering work, M.~Haruta showed that the area of every immediate basin of an attracting fixed point is finite if $\deg (q)\geq 3$. This is in contrast to the corresponding result for Newton maps of polynomials where the area of every immediate attracting basin is infinite. In certain cases the area is infinite even when $\deg (q) \in \{1, 2\}$, see \cite{Cil, CJ}. 

\emph{Overview of the paper.} In Section~\ref{Sec:Dynamics}, we review the dynamical properties of Newton maps and state Proposition~\ref{Thm:access} on number of invariant accesses to $\infty$. Structure of postcritically minimal Newton maps on each components of the Fatou set is provided in Proposition~\ref{Thm:Characterization_PCM}. In Section~\ref{Sec:Haissinsky}, we review the whole construction of the proof of Ha\"issinsky theorem (parabolic surgery) for polynomials and adapt it specifically for Newton maps of polynomials. In the last section we prove Main Theorem~\ref{Thm:Parabolic_Surgery_Newton}.

\emph{Notations}. Let us denote by $R^{\circ n}$ the $n^{\text{th}}$ iterate of a rational function $R$ and $z^n$ denotes the $n^{\text{th}}$ power of a number $z$. For a rational function $R$, let $F(R)$ and $J(R)$ denote the Fatou set and the Julia set of $R$ respectively. For a polynomial $p$, denote by $K(p)$ the filled Julia set of $p$. Denote the local degree of a function $R$ at a point $z$ by $\deg(R,z)$.	 Set $C_R = \{z|\deg(R,z)>1 \}$, critical points of $R$, and $P_R=\overline{\bigcup_{n\ge1} R^{\circ n}(C_R)}$, the postcritical set of $R$. The function $R$ is called \emph{postcritically finite} (\emph{PCF}) if $P_R$ is finite. The function $R$ is called \emph{geometrically finite} if the intersection $P_R\cap \nolinebreak J(R)$ is finite. 

\section{Dynamical properties of Newton maps}\label{Sec:Dynamics}

Let $f:\C\to\C$ be an entire function (polynomial or transcendental entire function). The meromorphic function given by $N_f(z):=z-f(z)/f'(z)$ is called the Newton map of $f(z)$.
The following theorem describes a Newton map in terms of its fixed point multipliers.

\begin{thm}\cite{RS}
\label{Thm:Theorem_Newton} Let $N:\C \to \Chat$ be a meromorphic function. It is the Newton map of an entire function $f:\C\to\C$ if and only if for each fixed point $\xi$, $N(\xi)=\xi$, there is a natural number $m=m_{\xi} \in \mathbb N$ such that $N'(\xi)=(m-1)/m$. In this case there exists a constant $c\in \C \backslash \{0\}$ such that $f=c e^{\int\frac{d \zeta}{\zeta-N(\zeta)} }$. Two entire functions $f$ and $g$ have the same Newton map if and only if $f=c g$ for some constant $c\in \C \backslash \{0\}$.
\end{thm}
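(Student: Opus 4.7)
The plan is to prove both directions by a direct computation with the differential equation $f'/f = 1/(z-N(z))$, which links $N_f$ to $f$.

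For necessity, suppose $N = N_f$ for some entire $f$. A finite fixed point $\xi$ of $N_f$ must satisfy $f(\xi)=0$; let $m$ be its multiplicity. Writing $f(z)=(z-\xi)^m g(z)$ with $g(\xi)\neq 0$ and computing yields
\[
N_f(z)-\xi = (z-\xi)\,\frac{(m-1)g(z) + (z-\xi)g'(z)}{m\,g(z) + (z-\xi)g'(z)},
\]
so $N_f'(\xi) = (m-1)/m$, giving the required form with $m_\xi = m$.

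For sufficiency, assume the multiplier condition and define
\[
F(z) := \int^{z} \frac{d\zeta}{\zeta - N(\zeta)}, \qquad f(z) := c\,e^{F(z)}.
\]
The integrand $h(z)=1/(z-N(z))$ is meromorphic on $\C$, with singularities only at the fixed points of $N$ (at any pole of $N$, the integrand is holomorphic and vanishes). At a fixed point $\xi$, the hypothesis $N'(\xi) = (m_\xi-1)/m_\xi$ gives $z - N(z) = (z-\xi)/m_\xi + O((z-\xi)^2)$, so $h$ has a simple pole at $\xi$ with residue $m_\xi \in \mathbb{N}$. Thus every period of $F$ around a closed loop lies in $2\pi i\,\Z$, making $f = c\,e^{F}$ single-valued on $\C$ minus the fixed point set. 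Near each such $\xi$, $F(z) = m_\xi \log(z-\xi) + \text{holomorphic}$, so $f$ extends analytically across $\xi$ with a zero of order exactly $m_\xi$; hence $f$ is entire. By construction $f'/f = h = 1/(z-N(z))$, which rearranges to $N_f(z) = z - f(z)/f'(z) = N(z)$.

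For the final uniqueness assertion, if $N_f = N_g$ then $f'/f = g'/g$ on the complement of the zero sets, so $(f/g)' \equiv 0$, forcing $f = cg$ with $c \in \C\setminus\{0\}$; conversely any nonzero constant multiple of $f$ clearly has the same Newton map. The main technical point is justifying that the exponential of the multivalued integral $F$ produces a globally-defined entire function, and this is exactly what the integer residue condition on $h$ secures; the rest is mechanical.
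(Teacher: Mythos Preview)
The paper does not supply its own proof of this theorem: it is quoted from \cite{RS} and stated without argument. So there is no in-paper proof to compare against; I can only assess your argument on its own merits.

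Your proof is correct and follows the natural route. A few small points worth tightening. In the necessity direction you should note why every finite fixed point of $N_f$ is necessarily a zero of $f$: if $f(\xi)\neq 0$ and $f'(\xi)=0$ then $N_f$ has a pole at $\xi$, not a fixed point, so the only way $f(\xi)/f'(\xi)=0$ is $f(\xi)=0$. In the sufficiency direction you implicitly use that the fixed points of $N$ are isolated; this holds because $z-N(z)$ is meromorphic and not identically zero (if it were, $N'\equiv 1$, contradicting the multiplier hypothesis). Your residue computation $\operatorname{Res}_{\xi}h = 1/(1-N'(\xi)) = m_\xi$ is the key step that makes $e^F$ single-valued, and your observation that $h$ is holomorphic across the poles of $N$ disposes of the only other possible singularities. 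The uniqueness clause is handled cleanly via $(f/g)'=0$.

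This is essentially the argument one finds in \cite{RS}; there is no meaningfully different approach available, since the formula $f = c\,e^{\int d\zeta/(\zeta-N(\zeta))}$ in the statement already dictates the method.
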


From \cite{RS}, it is known that for an entire function $f:\C\to\C$ its Newton map $N_f$ is a rational function if and only if there
are polynomials $p$ and $q$ such that $f$ has the form $p e^q$, the complex exponential functions.

\begin{figure}[ht]
	\centering
	\includegraphics[scale=.31]{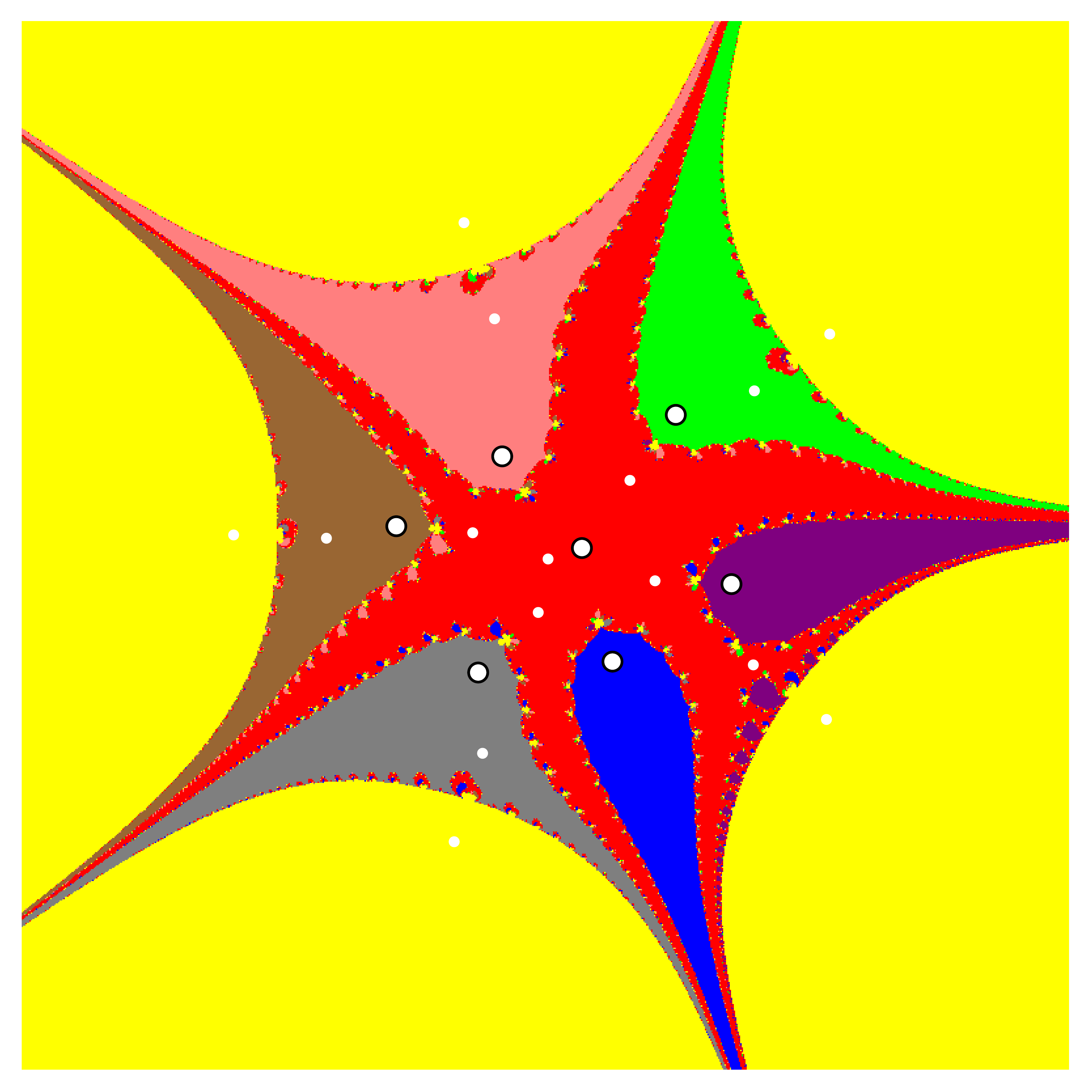}
	\caption[The Julia set of the Newton map of degree $12$.]{The Julia set of the Newton map of degree $12$. Yellow is the basin of parabolic fixed point at $\infty$ with $5$ petals, thick white dots with black circle boundary are fixed points,and white dots are critical points.}
	\label{Fig:NewtonBasin}
\end{figure}

Let $\xi$ be an attracting or a parabolic fixed point with multiplier $+1$ of $R$. The \emph{basin} $\cA(\xi)$ of $\xi$ is \(\text{int}\{z \in \Chat:\lim_{n\rightarrow \infty}R^{\circ n}(z)=\xi\},\) the interior of the set of starting points $z$ which converge to $\xi$ under iteration. An \emph{immediate basin} $\cA^{\circ}(\xi)$ of $\xi$ is the forward invariant connected component of the basin. For a parabolic fixed point there could be more than one immediate basin. Every immediate basin of a parabolic fixed point with multiplier $+1$ contains an open simply connected forward invariant domain where every orbit passing through the immediate basin must visit this domain and stays there. This domain is called an \emph{attracting petal}. The basin of an attracting or a parabolic periodic point is defined similarly. 

The point at $\infty$ for a Newton map $N_{p e^q}$ is parabolic with multiplier $+1$, and there exist $k$ attracting petals at $\infty$, where $k$ is exactly the degree of $q$ (see \cite{Haruta, RS}.) 

\begin{xrem} For a Newton map $N_{p e^q}$ the basin of the parabolic fixed point at $\infty$ can be understood as a virtual basin (see \cite{MS} and \cite{RS} for the definition of a virtual basin for meromorphic Newton maps). 
\end{xrem}
The immediate basin of a fixed point of a rational Newton map is simply connected and unbounded \cite{MS, Przytycki}. M. Shishikura proved that not only \emph{immediate} basins are simply connected but \emph{all} components of the Fatou set are simply connected for every rational function with a single weakly repelling fixed point \cite{Shishikura}. As a corollary of Shishikura's result we obtain that the {\em Julia set} for a rational \emph{Newton map} is \emph{connected}. Recently, K.~Bara\'nski, N.~Fagella, X. Jarque, and B. Karpi\'nska generalized this result to the setting of meromorphic Newton maps proving that the Julia set is connected for all Newton maps of entire functions \cite{BFJK14}. In \cite{BFJK17}, they proved that invariant Fatou components of Newton maps of entire functions, the Newton map is transcendental in general, have accesses to infinity. 

\begin{defin}[Invariant access to $\infty$]
\label{Def:Access}
Let $\cA^{\circ}$ be the immediate basin of a fixed point $\xi \in \C$ or the parabolic fixed point
at $\infty$ of a Newton map $N_{p e^q}$. Fix a point $z_0\in \cA^{\circ}$, and consider a curve $\Gamma:[0,\infty)\to \cA^{\circ}$ with $\Gamma(0) =z_0$ and $\lim_{t\to\infty}\Gamma(t)=\infty$. Its homotopy class (with endpoints fixed) within
$\cA^{\circ}$ defines an {\em invariant access to $\infty$} in $\cA^{\circ}$.
\end{defin}
As an initial point of a curve $\gamma'$ to $\infty$ we may consider any other point $z_0'\in \cA^{\circ}$. We take any curve $\gamma_0$ joining $z_0$ to $z_0'$; then $\gamma_0\cup \gamma'$ is a curve starting at $z_0$ and landing at $\infty$ in the same homotopy class of $\gamma'$, thus the choice of $z_0$ in the definition is not relevant.

Consider a curve $\eta$ that joins $z_0$ to $N_{p e^q}(z_0)$. Then for any curve $\Gamma:[0,\infty)\to \cA^{\circ}$ with $\Gamma(0) =z_0$ and $\lim_{t\to\infty}\Gamma(t)=\infty$, the curve $N_{p e^q}(\Gamma)\cup \eta$ belongs to the invariant access defined by $\Gamma$. This type of invariant accesses to $\infty$ (all invariant access of Newton maps are of this type) is called {\em strongly invariant access} in \cite{BFJK17}.
In the case of immediate basins of the parabolic fixed point at $\infty$, there always exists a special type of an invariant access called {\em dynamical access} to $\infty$. We obtain this access by considering any curve $\eta$ that joins $z_0$ to $N_{p e^q}(z_0)$, and taking the homotopy class of the curve $\Gamma:=\bigcup_{k\ge 0}N_{p e^q}^{\circ k}(\eta)$, which lands at $\infty$ and is forward invariant under $N_{p e^q}$. This dynamical access is of great importance for obtaining a correspondence between the spaces of Newton maps \cite{Ma}.

Every immediate basin of a Newton map has one or several invariant accesses to infinity. The following is a partial case of more general result on accesses within Fatou components of Newton maps of all entire functions \cite[Corollary C]{BFJK17}.
\begin{prop}[Invariant accesses to $\infty$]
\label{Thm:access}
Let $N_{p e^q}$ be a Newton map of degree $d\geq 3$, and $\cA^{\circ}$ be an immediate basin of a fixed point $\xi$ of $N_{p e^q}$ (an attracting or the parabolic fixed point at $\infty$). Assume that $\cA^{\circ}$ contains $k$ critical points of $N_{p e^q}$ (counting multiplicities), then the restriction $N_{p e^q}|_{\cA^{\circ}}$ is a branched covering map of degree $k+1$, and $\cA^{\circ}$ has exactly $k$ distinct invariant accesses to $\infty$. In the case of a parabolic immediate basin of $\infty$, there always exists one dynamical access among $k$ invariant access to $\infty$.  
\end{prop}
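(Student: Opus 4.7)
The plan is to combine a Riemann-Hurwitz count with a uniformization argument, then count invariant accesses via fixed points of the induced Blaschke product on the boundary circle.

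First I would establish that $\cA^\circ$ is simply connected and that $N_{pe^q}|_{\cA^\circ}:\cA^\circ\to\cA^\circ$ is a proper holomorphic self-map. Simple connectivity follows from Shishikura's theorem applied to $N_{pe^q}$, whose unique weakly repelling fixed point is $\infty$ (for attracting $\xi$ this is also the Przytycki--Shishikura result cited in the paper). Properness is standard in the attracting case and, in the parabolic case, follows from the Fatou-coordinate description of $N_{pe^q}$ near $\infty$. Applying Riemann-Hurwitz to this proper self-map of the topological disk $\cA^\circ$ with total critical multiplicity $k$ gives $1 = \deg(N_{pe^q}|_{\cA^\circ})\cdot 1 - k$, so the degree equals $k+1$.

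To count invariant accesses, let $\phi:\D\to\cA^\circ$ be a Riemann map and set $B:=\phi^{-1}\circ N_{pe^q}\circ\phi$. Then $B$ is a proper holomorphic self-map of $\D$, hence a finite Blaschke product of degree $k+1$. By Carath\'eodory's prime-end theory, invariant accesses to $\infty$ in $\cA^\circ$ correspond to prime ends of $\cA^\circ$ fixed under the induced action of $N_{pe^q}$, equivalently to fixed points of $B$ on $\partial\D$ whose radial image under $\phi$ limits to $\infty$. Since every finite fixed point of $N_{pe^q}$ is attracting (multiplier $(m-1)/m<1$), the only fixed point of $N_{pe^q}$ on $\partial\cA^\circ$ is $\infty$, so every boundary fixed point of $B$ corresponds to $\infty$, and distinct such points give non-homotopic accesses.

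It remains to count fixed points of $B$ on $\partial\D$. Extending $B$ to $\Chat$ by Schwarz reflection yields a rational map with $k+2$ fixed points. In the attracting case, the attracting fixed point $\phi^{-1}(\xi)\in\D$ pairs with its reflection in the exterior, leaving $k$ fixed points on $\partial\D$ and hence $k$ invariant accesses. In the parabolic case, the Schwarz-Pick inequality forbids interior fixed points of $B$ and reflection symmetry forbids them in the exterior; the boundary parabolic fixed point has multiplicity $3$ as a zero of $B(z)-z$, accounting for its attracting petal inside $\D$ together with its reflected petal in the exterior, leaving $k-1$ simple boundary fixed points, for a total of $k$ distinct fixed points on $\partial\D$. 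The invariant access supplied by the parabolic fixed point of $B$ is the dynamical access, realized by iterating any curve joining a base point to its image inside the attracting petal. The main delicacy is proving that each boundary fixed point of $B$ actually yields an access landing at $\infty$ in a well-defined homotopy class; this is handled via the snail-lemma analysis at the repelling boundary fixed points and via Fatou coordinates at the parabolic one, as carried out in \cite{BFJK17}.
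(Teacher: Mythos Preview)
The paper does not supply its own proof of this proposition: it is stated as a special case of \cite[Corollary~C]{BFJK17} and left without argument. Your outline is essentially the standard proof of such access-counting results (as in \cite{HSS} for polynomial Newton maps and \cite{BFJK17} in general): uniformize the simply connected basin, conjugate to a Blaschke product, and count boundary fixed points via the holomorphic fixed-point formula after Schwarz reflection. So there is no discrepancy in approach to report; your sketch simply fills in what the paper defers to the literature.

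Two minor remarks on the write-up. First, in the parabolic case the phrase ``Schwarz--Pick forbids interior fixed points'' is a little loose: the cleaner statement is that the Denjoy--Wolff point of $B$ is the boundary parabolic fixed point (since an attracting petal lies in $\D$), so $B$ has no fixed point in $\D$. Second, the assertion that every boundary fixed point of $B$ corresponds to $\infty$ uses that the only fixed point of $N_{pe^q}$ on $J(N_{pe^q})$ is $\infty$; this is correct (all finite fixed points are attracting by Theorem~\ref{Thm:Theorem_Newton}), but it is worth making the Carath\'eodory step explicit, namely that radial limits of $\phi$ exist at the repelling boundary fixed points and land on fixed points of $N_{pe^q}$ in $\partial\cA^\circ$. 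You already flag this as the delicate point and point to \cite{BFJK17}, which is appropriate.
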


For $1\le i \le d$, let $a_i$ be the superattracting fixed points of a postcritically finite Newton map $N_p$ of degree $d$ and let $\cA^{\circ}_i$ be their immediate basins. Let $\phi_i:(\cA^{\circ}_i,a_i) \to (\mathbb{D},0)$ be a Riemann map (global
 B\"ottcher coordinate) with the property that $\phi_i(N_p(z))=\left(\phi_i(z)\right)^{k_i}$ for each $z \in \D$, where $k_i-1\geq 1$ is the multiplicity of $a_i$ as a critical point of $N_p$. The map $z \mapsto z^{k_i}$ fixes the $k_i-1$ internal rays in $\mathbb{D}$. Under $\phi_i^{-1}$ these rays map to the $k_i-1$ pairwise disjoint (except for endpoints) simple curves $\Gamma^1_i,\Gamma^2_i,\ldots,\Gamma^{k_i-1}_{i}\subset \cA^{\circ}_i$ that connect $a_i$ to $\infty$. These curves are pairwise non-homotopic in $\cA^{\circ}_i$ (with homotopies fixing the endpoints) and are invariant under $N_p$ as sets. They represent all invariant accesses to $\infty$ of $\cA^{\circ}_i$.
\begin{defin}[Channel Diagram $\Delta$]\label{Def:UnmarkedChanneldiagram}
 	 The union
 	 \[
 	 \Delta = \bigcup_{i=1}^d \bigcup_{j=1}^{k_{i}-1}
 	 \overline{ \Gamma^{j}_{i} }
 	 \]
 	 is called the \emph{channel diagram.} It forms a connected graph in $\Chat$ that joins finite fixed points to $\infty$.
\end{defin}
 
 It follows from the definition that $N_p(\Delta)= \Delta$ as sets. The channel diagram records the mutual locations (embedding) of the immediate basins of $N_p$. 
 
\begin{defin}[Marked Channel Diagram $\Delta^+_n$]\label{Def:MarkedChanneldiagram}
 	Let $N_p$ be a postcritically finite Newton map with superattracting fixed points $a_1,a_2,\ldots,a_d$. Let $\Delta= \bigcup_{i=1}^d \bigcup_{j=1}^{k_{i}-1}\overline{ \Gamma^{j}_{i} }$ be the channel diagram of $N_p$. For each $1\le i \le d$, let us mark at most one fixed ray $\Gamma^{j^*}_{i}$ in the immediate basin of $a_i$. If a fixed ray in the immediate basin of $a_i$ is marked then the basin of $a_i$ as called a \emph{marked basin}. If $n\le d$ rays are marked, then the marked channel diagram is denoted by $\Delta^+_n$.
\end{defin}
 A basin can be marked or unmarked. The marked channel diagram is a channel diagram $\Delta$ with marking, that is extra information about which fixed rays are selected (marked). Parabolic surgery transforms the marked access of postcritically finite Newton map to a dynamical access for postcritically minimal Newton map.

 Let us prove the following technical lemma on the parabolic Blaschke products. We could not find a reference for its proof, so we provide a full proof of it here. Recall that if $z_0$ is a parabolic fixed point of $f$ with the multiplier $+1$, then in local holomorphic coordinates the map can be written as a parabolic germ $z +a_n (z-z_0)^{n+1} + \cdots$ with complex $a_n\not =0$ and integer $n\ge 1$. The number $n$ refers to the number of distinct directions to converge to the fixed point. In the following lemma, $n=2$, and the parabolic fixed point in this case is said to be {\em double parabolic}.
	 \begin{lem}
	 	\label{Lem:Blaschke}
	 	Let $E:\D\to\D$ be a proper holomorphic function of degree $k\ge2$. Assume that $E$ has a critical point at $z=0$ of (maximal) multiplicity $k-1$ and has a double parabolic fixed point at $z=1$ with the multiplier $+1$. Then $$E(z)=P_k(z)=\frac{z^k+a}{1+a z^k} \text{  where  } a=\frac{k-1}{k+1},$$ and the Julia set of $E$ is the unit circle $\S^1$.
	 \end{lem}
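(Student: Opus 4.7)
First, since $E:\D\to\D$ is proper holomorphic of degree $k$, it is a finite Blaschke product of degree $k$. The hypothesis that $0$ is critical of multiplicity $k-1$ gives $E(z)=E(0)+\alpha z^k+O(z^{k+1})$ with $\alpha\ne 0$. Choosing a M\"obius self-map $\psi$ of $\D$ with $\psi(E(0))=0$, the composition $\psi\circ E$ is a Blaschke product of degree $k$ having a zero of order $k$ at $0$. Since a degree-$k$ Blaschke product has exactly $k$ zeros in $\D$ counted with multiplicity, all of them coincide with $0$, whence $\psi\circ E(z)=\lambda z^k$ for some $|\lambda|=1$. Consequently $E(z)=M(z^k)$ with $M(w):=\psi^{-1}(\lambda w)$ a M\"obius self-map of $\D$.

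Next I would pin down $M$ from the boundary Taylor data at $z=1$. From $E(1)=1$, $E'(1)=1$, and the double-parabolic expansion $E(z)=z+a_2(z-1)^3+O((z-1)^4)$, the chain rule applied to $E(z)=M(z^k)$ gives
\[
M(1)=1,\qquad M'(1)=\frac{1}{k},\qquad M''(1)=-\frac{k-1}{k^2}.
\]
Writing $M(w)=e^{i\beta}(w-c)/(1-\bar c w)$ with $c\in\D$, the condition $M(1)=1$ forces $e^{i\beta}=(1-\bar c)/(1-c)$, after which a short computation yields
\[
M'(1)=\frac{1-|c|^2}{|1-c|^2},\qquad \frac{M''(1)}{M'(1)}=\frac{2\bar c}{1-\bar c}.
\]
Inserting the prescribed values, both equations collapse to $(k+1)\bar c=-(k-1)$, so $c=-a$ with $a=(k-1)/(k+1)$, and then automatically $e^{i\beta}=1$. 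Hence $M(w)=(w+a)/(1+aw)$ and $E(z)=P_k(z)$. The main technical point is that $M'(1)=1/k$ alone only constrains $c$ to a circle in $\D$, so the second-order information $M''(1)=-(k-1)/k^2$ is essential to pin $c$ down to the single real value $-a$.

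Finally, for the Julia set I would use the reflection symmetry $E(1/\bar z)=1/\overline{E(z)}$ that every Blaschke product enjoys. This makes $\D$ and $\Chat\setminus\overline{\D}$ both completely invariant under $E$, and it interchanges the two attracting petals of the double-parabolic point at $1$, placing one in $\D$ and one outside. The unique critical point $0$ in $\D$ has forward orbit contained in $(0,1)$ and converging to $1$, so it lies in the interior petal; since $0$ is the only critical point of $E$ in $\D$, every Fatou component in $\D$ must be an iterated preimage of that petal, so the parabolic basin exhausts $\D$. By the reflection the same holds for $\Chat\setminus\overline{\D}$, and therefore $J(E)\subseteq\S^1$; the reverse inclusion follows from the non-normality of iterates on any open set crossing $\S^1$, produced by the degree-$k$ covering $E|_{\S^1}$ away from the parabolic point.
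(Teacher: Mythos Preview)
Your argument is correct and follows essentially the same route as the paper: reduce $E$ to $M(z^k)$ for a disk automorphism $M$ by exploiting the maximal-multiplicity critical point at $0$, and then determine $M$ from the local data at $z=1$. The only difference is in bookkeeping: the paper picks $M_a$ to fix $1$ from the outset (so the rotation parameter is gone immediately) and then differentiates the fixed-point polynomial $E(z)-z$ twice at $z=1$, whereas you keep $M$ general and read off $M(1)$, $M'(1)$, $M''(1)$ from the chain rule applied to the expansion $E(z)=z+a_2(z-1)^3+\cdots$. These are the same computation packaged two ways. Your Julia-set paragraph is more detailed than the paper's one-line remark; note that for the inclusion $J(E)\subseteq\S^1$ you actually only need that $\{E^{\circ n}|_{\D}\}$ is normal (immediate since $E(\D)\subseteq\D$), so the discussion of which Fatou components lie in $\D$ is more than strictly necessary.
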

	
	 \begin{proof}
	 	Indeed, $E$ extends to the whole Riemann sphere as a Blaschke product. Denote $E(0)=a$, the critical value, and let $$M_a(z):=\frac{1-\bar{a}}{1-a}\frac{z-a}{1-\bar{a}z}$$ be an automorphism of $\D$ that fixes $z=1$. Then the map $M_a\circ E$ is a proper map of $\D$ such that $M_a( E(0))=0$, and $z=0$ is the only critical point with maximum multiplicity $k-1$. Then $M_a\circ E(z)=u z^k$ for some constant $|u|=1$. Since both $M_a$ and $E$ fix $z=1$, we conclude that $u=1$. Then we obtain 
\[E(z)=\dfrac{z^k+a\frac{1-\bar{a}}{1-a}}{\frac{1-\bar{a}}{1-a}+\bar{a} z^k}.\] 
By assumption, $E(z)$ has a double parabolic fixed point at $z=1$. This means that the fixed point equation $E(z)=z$ has a triple solution at $z=1$. After simplification, the equation becomes \[ z^{k+1}-\dfrac{1}{\bar{a}}z^k+\dfrac{1-\bar{a}}{\bar{a}(1-a)}z-\dfrac{1-\bar{a}}{\bar{a}(1-a)}=0.\]
Taking the second derivative of the left hand side of the latter, we obtain $$(k+1)k z^{k-1}-k(k-1)\dfrac{1}{\bar{a}}z^{k-2}.$$ Requiring that $z=1$ be its solution, we deduce $a=\frac{k-1}{k+1}$. Thus $E(z)=P_k(z)=\dfrac{z^k+a}{1+a z^k}$, where $a=\frac{k-1}{k+1}$. The double parabolic fixed point (two attracting petals with two basins, the open unit disc and the complement of the closed unit disc) condition makes the Julia set connected, the unit circle $\S^1$.
\end{proof}

Postcritically minimal Newton maps of the family \eqref{Eq:peq} are not far from postcritically finite Newton maps of polynomials.
\begin{prop}[Normal forms for postcritically minimal Newton maps]
\label{Thm:Characterization_PCM}
Let $N_{p e^q}$ be a PCM Newton map, $U$ be any connected component of the Fatou set of $N_{p e^q}$, and let $V=N_{p e^q}(U)$. Then $U$ contains a unique center $\xi_U$. Moreover, there exist Riemann maps $\psi_U:U\to \D$ with $\psi_U(\xi_U)=0$ and $\psi_V:V\to \D$ with $\psi_V(\xi_V)=0$ such that
\begin{enumerate}[(a)]
\item if $U$ is an immediate basin of a parabolic fixed point at $\infty$ (in this case $V=U$), then the following diagram is commutative
\[
\begin{diagram}[heads=LaTeX,l>=3em]
U&\rTo^{N_{p e^q}}   & U\\
\dTo^{\psi_U} && \dTo_{\psi_U}\\
\D & \rTo^{P_k}& \D,
\end{diagram}
\]
where $P_k(z)=\frac{z^k+a}{1+a z^k}$ with $a=\frac{k-1}{k+1},$ the parabolic Blaschke product, and $k-1\ge 1$ is the multiplicity of the center of $U$ as a critical point of $N_{p e^q}$;
\item in all other connected component of the Fatou set (also including periodic ones), we have the following commutative diagram
\[
\begin{diagram}[heads=LaTeX,l>=3em]
 U&\rTo^{N_{p e^q}}& V\\
\dTo^{\psi_U}& &\dTo_{\psi_V}\\
\D & \rTo^{z\mapsto z^k}&\D
\end{diagram}
\]
where $k-1$ is the multiplicity of the center of $U$ as a critical point of $N_{p e^q}$, if the center is not a critical point then we let $k=1$.
\end{enumerate}
\end{prop}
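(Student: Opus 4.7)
The plan is to treat each Fatou component via Riemann uniformization and read off the conjugated dynamics one component at a time. First, by the simple-connectivity theorem for Fatou components of Newton maps (Shishikura, recalled in Section~\ref{Sec:Dynamics}), every Fatou component $U$ of $N_{pe^q}$ is simply connected, so admits a Riemann map $\psi_U \colon U \to \D$. Next, I would verify the existence and uniqueness of the center $\xi_U$ by working through the cases of Definition~\ref{Def:Center}: in immediate basins of superattracting (periodic) points it is the unique critical periodic point there; in immediate basins of $\infty$ it is the critical point supplied by the PCM condition; and in strictly preperiodic components it is defined inductively by $N_{pe^q}^{\circ m_U}(\xi_U) = \xi_{N_{pe^q}^{\circ m_U}(U)}$. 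Uniqueness is forced by the minimality of critical orbit relations built into PCM, which also rules out extra critical points in $U$ beyond the center. The key structural lemma is then that $N_{pe^q}|_U\colon U \to V := N_{pe^q}(U)$ is a proper holomorphic branched cover of degree $k$, ramified only at $\xi_U$ with local degree $k$ (so $k-1$ is the multiplicity of $\xi_U$ as a critical point, and $k=1$ when $\xi_U$ is non-critical); for immediate basins this is Proposition~\ref{Thm:access}, and for preperiodic components it follows from simple connectivity together with the one-critical-point property just noted.

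Case (a) is the delicate step and constitutes the main obstacle. Here $V=U$ and $\xi_U$ is the unique critical point of $U$. I would normalize $\psi_U$ by $\psi_U(\xi_U) = 0$ and by sending the \emph{dynamical} access to $\infty$ (a distinguished prime end of $U$ in the Carath\'eodory boundary, guaranteed by Proposition~\ref{Thm:access}) to the boundary point $z = 1$. The conjugated map $E := \psi_U \circ N_{pe^q} \circ \psi_U^{-1} \colon \D \to \D$ is then proper of degree $k$, so extends by Schwarz reflection to a degree-$k$ Blaschke product on $\Chat$ satisfying $E(1/\bar z) = 1/\overline{E(z)}$; its only critical point in $\D$ is $0$ of order $k-1$, and $E(1) = 1$ with multiplier $+1$. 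The heart of the proof is the verification that $z = 1$ is a \emph{double} parabolic fixed point of $E$, so that Lemma~\ref{Lem:Blaschke} delivers $E = P_k$. For this I would transport the Leau--Fatou flower structure across $\psi_U$: since the $\deg(q)$ attracting petals of $N_{pe^q}$ at $\infty$ are distributed one per immediate basin of $\infty$, the unique petal contained in $U$ maps under $\psi_U$ to a single attracting petal of $E$ at $z=1$ inside $\D$; by the Blaschke reflection symmetry there is a mirror petal in $\Chat \setminus \overline{\D}$; and these are the only two petals, since $\D$ and $\Chat\setminus\overline{\D}$ exhaust the Fatou set of $E$ and each contributes exactly one immediate parabolic petal. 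Hence $E$ has precisely two attracting petals at $z=1$, is double parabolic, and Lemma~\ref{Lem:Blaschke} identifies $E$ with $P_k$.

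For case (b), I choose Riemann maps $\psi_U$, $\psi_V$ sending $\xi_U$, $\xi_V$ to $0$. The conjugated map $\psi_V \circ N_{pe^q} \circ \psi_U^{-1} \colon \D \to \D$ is proper of degree $k$ with unique critical point at $0$ of order $k-1$, sending $0$ to $0$; such a map is a finite Blaschke product with a single zero of multiplicity $k$ at $0$, hence of the form $z \mapsto u z^k$ for some $|u|=1$. I then normalize by anchoring each forward orbit of Fatou components at its periodic endpoint---using Böttcher's theorem at superattracting periodic components, and using the normalization from case (a) at immediate basins of $\infty$---and rotating $\psi_U$ at each step of a backward induction along the orbit to absorb the factor $u$. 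This produces the commutative diagram with $z\mapsto z^k$ throughout, completing the proof.
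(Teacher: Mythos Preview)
Your overall strategy matches the paper's: uniformize each Fatou component, identify the conjugated self-map of $\D$, and invoke Lemma~\ref{Lem:Blaschke} for the parabolic immediate basins. In parts (a) and (b) your argument runs parallel to the paper's, and your petal-counting justification that $z=1$ is a \emph{double} parabolic fixed point of $E$ is in fact more detailed than the paper's treatment, which simply asserts this and applies Lemma~\ref{Lem:Blaschke}.

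There is, however, a gap in your uniqueness argument for the center. You write that ``uniqueness is forced by the minimality of critical orbit relations built into PCM, which also rules out extra critical points in $U$ beyond the center.'' But Definition~\ref{Def:PCM} only tells you \emph{where} the critical points in a preperiodic component land (at the center of the eventual immediate basin), not \emph{how many} critical points lie in the component, nor how many preimages of the center of $V=N_{pe^q}(U)$ sit inside $U$. The missing step is topological: once PCM guarantees that every critical value of $N_{pe^q}|_T\colon T\to U$ equals $\xi_U$, the restriction $N_{pe^q}\colon T\setminus C_T\to U\setminus\{\xi_U\}$ (with $C_T=N_{pe^q}^{-1}(\xi_U)\cap T$) is an unramified covering onto a space with $\pi_1\cong\Z$; since $T$ is a disk, $T\setminus C_T$ has free fundamental group on $|C_T|$ generators, and injectivity of the induced map on $\pi_1$ forces $|C_T|=1$. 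This is exactly the paper's argument (equivalently Riemann--Hurwitz), and it simultaneously yields uniqueness of the center and the one-critical-point property you need. Your phrase ``follows from simple connectivity together with the one-critical-point property just noted'' has the correct ingredients but in the wrong logical order: simple connectivity, via the covering argument, is what \emph{produces} the one-critical-point property, not an additional hypothesis to be combined with it.
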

Postcritically finite Newton maps of polynomial do not have a parabolic fixed point, thus the first diagram is the key difference between postcritically \emph{finite} Newton maps of polynomials and postcritically \emph{minimal} Newton maps of complex exponential functions.

\begin{proof}[Proof of Proposition \ref{Thm:Characterization_PCM}]
To simplify notation, let $R=N_{p e^q}$. The Julia set of the PCM Newton map $R$ is connected, hence every connected component of the Fatou set is simply connected. It is sufficient to show \emph{uniqueness} of a center in $U$ as there always exists such a center by Definition~\ref{Def:Center}. If $U$ is an \emph{immediate basin} of a superattracting periodic point (fixed points are also counted as periodic points) then the center is the critical periodic point, which is unique. If $U$ is a fully invariant Fatou component (superattracting or parabolic) then we are done. Let $T$ be a component of $R^{-1}(U)$ other than $U$. Let $C_T$ be the union of the set $R^{-1}(\xi_U)\cap T$ and the set of critical points of $R$ in $T$. Then by Definition~\ref{Def:PCM} we have $R(C_T)=\{\xi_U\}$. We want to show that $C_T$ is a single point. This follows from the Riemann-Hurwitz formula since $R:T\setminus C_T\to U\setminus \xi_U$ is unramified covering. Alternatively, observe that $\pi_1(U\setminus \xi_U)=\Z$ and the induced map $R_*:\pi_1(T\setminus C_T)\to \pi_1(U\setminus \xi_U)$ is injective, hence $C_T$ is a single point, denote it by $\xi_T$. In other words, $R^{-1}(\xi_U)\cap T=\{\xi_T\}$. Induction finishes the argument for all other iterated preimages of $U$, thus proving the uniqueness of a center in every component of the Fatou set.

We finish the proof of the theorem by showing the existence of Riemann maps with the suitable commutative diagrams. Let $U$ be a parabolic \emph{immediate basin} of $\infty$ and $\psi_U$ be its Riemann map sending its center $\xi_U$ to the origin. The Riemann map $\psi_U$ is uniquely defined up to post-composition by a rotation about the origin. The point at $\infty$ has many accesses, among which there exists a unique dynamical access. After rotation,  thus normalizing the Riemann map, we assume that this access goes to $z=1$. Denote \(F=\psi_U\circ R \circ \psi^{-1}_U,\) which is a proper holomorphic map, of degree $k$, of the unit disc with a single critical point of multiplicity $k-1$ at the origin. The map $F$ has an extension to the closed unit disk, still denoted by $F$. Then $z=1$ is a double parabolic fixed point for $F$. By Lemma~\ref{Lem:Blaschke}, we obtain $F(z)=P_k(z)=\frac{z^k+a}{1+a z^k}$, where $a=\frac{k-1}{k+1}$.

For all other cases (periodic or not), let $U$ be a connected component of the Fatou set and let $V=R(U)$. Let $\psi_U$ and $\psi_V$ be the Riemann maps of $U$ and $V$ sending their centers $\xi_U$ and $\xi_V$ to the origin respectively (if $U$ is forward invariant, then we let $V=U$ and $\psi_V=\psi_U$). Then $F=\psi_V\circ R \circ \psi^{-1}_U$ is a proper map of the unit disc with the only fixed point at the origin. We have $F(z)=u z^k$ with $|u|=1$, where $k$ is a natural number, the degree of $F$. Replace $\psi_V$ and $\psi_U$ by $\mu \psi_V$ and $s \psi_U$ respectively (we denote them again by $\psi_V$ and $\psi_U$ and the composition map by $F$). Then $F(z)=s\mu^{-k}u z^k$; with the choice of $s\mu^{-k_U}u=1$ we obtain $F(z)=z^{k_U}$, where $k_U-1$ is the multiplicity of the critical point $\xi_U$ ($k_U=1$ if the center $\xi_U$ is not a critical point of $R$). If $U=V$ i.e. $U$ is a superattracting immediate basin, then we have $s^{1-k}u=1$, hence $u=s^{k-1}$, so we have $k-1$ choices for $\psi_U$. If $U$ is not an immediate basin ($U$ could be a parabolic component as well) i.e. $U\not =V$ then for a fixed choice of $\psi_V$ we have $\mu^{-k_U}u=1$, hence $u=\mu^{k_U}$, and there are $k_U$ choices for $\psi_U$.
\end{proof}

\begin{xrem}
It is worth mentioning that for an attracting component of the Fatou set the existence of the Riemann map satisfying the second commutative diagram of the previous theorem is given in \cite[Proposition~4.2]{DH1}, where the proof was carried out for postcritically finite polynomials. But the same proof also works for every attracting component of the Fatou set of postcritically minimal Newton maps, since a rational Newton map restricted to these components behaves like a postcritically finite polynomial does.
\end{xrem}

\section{Background on Parabolic surgery for Newton maps}\label{Sec:Haissinsky}

Turning hyperbolics (attracting and repelling fixed points) into parabolic fixed points or perturbing parabolic fixed points into hyperbolics is a big issue in complex dynamics. In \cite[page~16]{GM}, Goldberg and Milnor formulated the following conjecture: for a polynomial $p$ with a parabolic cycle there exists a small perturbation of $p$ such that the immediate basin of the parabolic cycle of $p$ is converted to the basins of some attracting cycles; and the perturbed polynomial on its Julia set is topologically conjugate to $p$ when restricted to the Julia set. Affirmative answers to the conjecture for the case of geometrically finite rational functions were given by many people, including G. Cui, P. Ha\"issinsky, T. Kawahira, and Tan Lei (see \cite{CT1, CT2, Ha, Kaw1, Kaw2}). We remark that the local dynamics near repelling and parabolic fixed points are never conjugate to each other. Any quasiconformal conjugacy to a repelling germ is again a repelling germ.

Let us recall definitions and properties of quasiconformal and David homeomorphisms. Let $U$ and $V$ be domains in $\C$. Let $K\ge 1$, and set $k:=\frac{K-1}{K+1}$. The orientation preserving homeomorphism $\phi:U\to V$ is called \emph{$K$-quasiconformal} if the partial derivatives $\partial\phi$ and $\bar{\partial}\phi$ exist in the sense of distributions and belong to $L^2_\text{loc}(U)$ (i.e. are locally square integrable) and they satisfy $|\bar{\partial}\phi|\le k |\partial\phi|$ in $L^2_\text{loc}(U)$. Quasiconformal homeomorphisms are used in dynamics to work with functions that are not conformal; leaving the space of holomorphic maps we gain be more flexible but are still able to recover the conformal structure using the Measurable Riemann Mapping Theorem and Weyl's lemma. The following properties of quasiconformal homeomorphisms are of great importance for our purpose.

\begin{itemize}
\item If $\phi$ is a $K$-quasiconformal homeomorphism then so is its inverse;
\item Quasiconformal removability of quasiarcs: If $\Gamma$ is a {\em quasiarc} (the image of a straight line under a quasiconformal homeomorphism) and $\phi:U\to V$ a homeomorphism that is $K$-quasiconformal on $U\setminus\Gamma$, then $\phi$ is $K$-quasiconformal on $U$, and hence $\Gamma$ is quasiconformally removable. In particular, points, lines, and smooth arcs are quasiconformally removable.
\item Weyl's Lemma: If $\phi$ is $1$-quasiconformal, then $\phi$ is conformal. In other words, if $\phi$ is a quasiconformal homeomorphism and $\partial_{\bar{z}} \phi=0$ almost everywhere, then $\phi$ is conformal.
\end{itemize}

In the following, an extension of quasiconformal maps by relaxing the uniform upper bound on dilatation is given. It is used by Ha\"issinsky for his parabolic surgery.
Let us first define the David--Beltrami differentials.

\begin{defin}[David--Beltrami differential]
A measurable Beltrami differential
$$
\mu=\mu(z)\frac{d\bar{z}}{dz} \text{ on a domain } U\subset\C
$$
with $||\mu||_\infty=1$ is called \emph{David--Beltrami differential} if there exist constants $M>0$, $\alpha>0$, and $\epsilon_0>0$ such that

\begin{equation}\label{Eq:Beltrami}
\text{Area}\{z\in U:|\mu_\phi(z)|>1-\epsilon\}<Me^{-\frac{\alpha}{\epsilon}}\ \text{for}\ \epsilon<\epsilon_0.
\end{equation}

\end{defin}

\begin{defin}[David homeomorphism]
An orientation preserving homeomorphism $\phi:U\to V$ for domains $U$ and $V$ in $\Chat$ is called David homeomorphism (David map or David) if it belongs to the Sobolev class $W_\text{loc}^{1,1}(U)$, i.e. has locally integrable distributional partial derivatives in $U$, and its induced David--Beltrami differential
\[
\mu_\phi:=\frac{\partial_{\bar{z}} \phi}{\partial_z \phi}\frac{d\bar{z}}{dz}
\]
satisfies the inequality~\eqref{Eq:Beltrami}.
\end{defin}

The condition in the definition is referred to as the \emph{area condition.} The area in the definition is the spherical area. For domains that are bounded, we use the Euclidean area instead. If we let $K_\phi(z):=\frac{1+|\mu_\phi(z)|}{1-|\mu_\phi(z)|}$, the real dilatation of $\phi$, then, equivalently, we can state the area condition as the following: there exist constants $M>0$, $\alpha>0$, and $K_0>1$ such that
\[
\text{Area}\{z\in U:K_\phi(z)>K\}<Me^{-\alpha K}\ \text{for}\ K>K_0.
\]

David maps are different from quasiconformal homeomorphisms in many respects; for instance, the inverse of David map may not be David. There is no Weyl's type lemma for David maps, see \cite{Zak} for more differences between quasiconformal and David maps.

But, similarly to quasiconformal homeomorphisms, quasiarcs are removable for David maps, so we can glue two David maps along a nice boundary to obtain a global David map.

\begin{thm} \cite[David Integrability Theorem]{BF, David}
\label{Thm:David_Integration}
Let $\mu$ be a David--Beltrami differential on a domain $U\subset\Chat$. Then there exists a David homeomorphism $\phi:U\to V$ whose complex dilatation $\mu_\phi$ coincides with $\mu$ almost everywhere.

The integrating map is unique up to post-composition by a conformal map, i.e. if $\widetilde \phi:U\to \tilde V$ is another David homeomorphism such that $\mu_{\widetilde\phi}=\mu$ almost everywhere, then $\widetilde \phi \circ \phi^{-1}:V\to\widetilde V$ is conformal.
\end{thm}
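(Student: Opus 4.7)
The plan is to prove existence by a truncation-and-limit argument reducing to the classical Measurable Riemann Mapping Theorem (MRMT), together with a David-type modulus of continuity that replaces the usual quasiconformal equicontinuity, and to deduce uniqueness from the corresponding uniqueness statement applied to the quasiconformal approximants.

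For existence, I would truncate the dilatation. For each integer $n\ge 1$, set $\mu_n:=\mu\cdot \mathbf{1}_{\{K_\mu\le n\}}$, so that $\|\mu_n\|_\infty<1$. The MRMT then yields, after fixing a three-point normalization, a unique quasiconformal homeomorphism $\phi_n\colon U\to V_n$ whose Beltrami coefficient is $\mu_n$ almost everywhere. The task is to pass to the limit $n\to\infty$ and show that the limit is a David homeomorphism whose dilatation coincides with $\mu$ almost everywhere.

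The main obstacle is compactness: as $n\to\infty$ the maps $\phi_n$ become arbitrarily non-quasiconformal, so the usual H\"older equicontinuity for $K$-quasiconformal maps fails. The David area condition \eqref{Eq:Beltrami} provides a substitute, namely a logarithmic modulus of continuity
\[
|\phi_n(z)-\phi_n(w)|\le \frac{C}{(\log(1/|z-w|))^{\beta}}
\]
on compact subsets of $U$, where $\beta>0$ depends on the constant $\alpha$ in \eqref{Eq:Beltrami}. This is obtained by an extremal-length estimate: around any pair $z,w$ one considers a round annulus and shows that its modulus is only mildly distorted by $\phi_n$, because the set where $K_{\phi_n}$ is large has exponentially small area and therefore contributes negligibly to the modulus integral. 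From this estimate and Arzel\`a--Ascoli I extract a subsequence converging locally uniformly to a continuous map $\phi$; the same estimate applied to the inverses (which remain quasiconformal away from a controlled set) shows that $\phi$ is injective, hence a homeomorphism. A standard convergence argument for solutions of the Beltrami equation together with Fatou's lemma then yields $\mu_{\phi}=\mu$ almost everywhere and propagates the area condition from the $\phi_n$ to $\phi$, so that $\phi$ is a David map.

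For uniqueness, let $\widetilde\phi$ be a second David integrating map. I would normalize both $\phi$ and $\widetilde\phi$ to agree at a common triple of points and construct them simultaneously as locally uniform limits of quasiconformal approximants $\phi_n$ and $\widetilde\phi_n$ both integrating $\mu_n$ with the same normalization. By the MRMT uniqueness clause one has $\widetilde\phi_n=\phi_n$ for every $n$, hence $\widetilde\phi=\phi$ in the normalized model; for an arbitrary pair of normalizations the two integrating maps therefore differ by the unique M\"obius transformation matching the prescribed triples, and in particular $\widetilde\phi\circ\phi^{-1}$ is conformal. The advantage of this route is that it sidesteps the absence of a Weyl-type lemma for David maps by performing all infinitesimal comparisons at the quasiconformal level before passing to the limit.
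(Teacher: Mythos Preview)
The paper does not prove this theorem. Theorem~\ref{Thm:David_Integration} is quoted from the literature (David's original paper and Branner--Fagella), as indicated by the citation in its header; it is used as a black box in the surgery construction and no argument for it appears anywhere in the paper. There is therefore nothing to compare your proposal against.

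That said, a brief comment on the proposal itself. Your existence sketch follows the standard route (truncate $\mu$, integrate by MRMT, extract a limit via a logarithmic modulus of continuity derived from the exponential area bound), and that is indeed how the result is proved in the references. Your uniqueness argument, however, has a genuine gap. You are handed an \emph{arbitrary} David homeomorphism $\widetilde\phi$ with $\mu_{\widetilde\phi}=\mu$; nothing tells you that $\widetilde\phi$ is a locally uniform limit of quasiconformal maps $\widetilde\phi_n$ integrating the truncations $\mu_n$. Manufacturing such approximants for a given $\widetilde\phi$ is essentially what you are trying to prove, so the argument is circular as written. The actual uniqueness proof (see Tukia or Branner--Fagella) does not sidestep the Weyl issue by approximation; it shows directly that a David map is absolutely continuous and that the composition $\widetilde\phi\circ\phi^{-1}$ lies in $W^{1,1}_{\mathrm{loc}}$ with $\bar\partial$-derivative zero almost everywhere, and then invokes a removability/analyticity result valid in that regularity class. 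If you want to pursue the approximation idea, you would first have to prove that every David solution of the given Beltrami equation arises as such a limit, which requires separate work.
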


The following is parabolic surgery for polynomials. 
\begin{thm}[P.~Ha\"issinsky] \cite{Ha} \label{Thm:Haissinsky_Surgery}
Let $p$ be a polynomial of degree $d\ge 2$ with a (non-super)attracting fixed point $\alpha$ with basin of attraction $\cA(\alpha)$ and a repelling fixed point $\beta\in\partial \cA^{\circ}(\alpha)$, and assume $\beta\not \in P_p$. Suppose also that $\beta$ is accessible from the basin $\cA^{\circ}(\alpha)$. Then there exist a polynomial $q$ of degree $d$ and a David homeomorphism $\phi:\Chat\to\Chat$ such that
\begin{enumerate}[(a)]
\item for all $z\not \in \cA(\alpha)$, we have $\phi\circ p(z)=q\circ\phi(z)$; in particular, $\phi:J(p)\to J(q)$ is a homeomorphism which conjugates $p$ to $q$ on the Julia sets;
\item $\phi(K(p))=K(q)$, $\phi(\beta)$ is a parabolic fixed point of $q$ with the multiplier $q'(\phi(\beta))=1$, and
$\phi(\cA^{\circ}(\alpha))=\cA^{\circ}(\phi(\beta))$;
\item $\phi$ is conformal in the interior of $\Chat\setminus \cA(\alpha)$.
\end{enumerate}
\end{thm}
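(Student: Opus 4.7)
The plan is to perform a David-type surgery inside the basin $\cA(\alpha)$ leaving $p$ untouched on $\Chat\setminus\cA(\alpha)$, and then straighten via the David Integrability Theorem. Concretely, I would (i) build a continuous topological model $F:\Chat\to\Chat$ with $F=p$ outside $\cA(\alpha)$ and with a parabolic fixed point of multiplier $+1$ at $\beta$, chosen so that its Beltrami differential $\mu_F$ is $F$-invariant; (ii) verify that $\mu_F$ satisfies David's area condition; (iii) apply Theorem \ref{Thm:David_Integration} to obtain a David homeomorphism $\phi$ with $\mu_\phi=\mu_F$; (iv) set $q:=\phi\circ F\circ\phi^{-1}$, which by $F$-invariance of $\mu_F$ has vanishing complex dilatation a.e.\ and is therefore conformal, hence rational; and (v) normalize $\phi$ so that $\phi(\infty)=\infty$, which forces $q$ to be a polynomial of degree $d$.

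\emph{Construction of $F$.} Because $\beta$ is accessible from $\cA^{\circ}(\alpha)$, fix an invariant arc $\gamma\subset\cA^{\circ}(\alpha)$ landing at $\beta$ and a linearizing neighborhood $V$ of $\beta$ in which $p$ is conjugate to $z\mapsto\mu z$, $|\mu|>1$. The crescent $A:=(V\cap\cA^{\circ}(\alpha))\setminus p^{-1}(V\cap\cA^{\circ}(\alpha))$ is a fundamental domain of $p|_{\cA^{\circ}(\alpha)}$ near $\beta$. Fix a parabolic model germ $G(w)=w/(1+w)$ with multiplier $+1$ at $0$ and a fundamental crescent $A'$ in its attracting petal. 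In Fatou-type coordinates both germs become unit translations, so an affine map in these coordinates produces an explicit homeomorphism $h:A\to A'$; interpolate $h$ smoothly to match $p$ across the outer boundary of $A$, which introduces dilatation only in a bounded region. Extend by $p$-equivariance throughout $\cA^{\circ}(\alpha)$, and propagate to every other component of $\cA(\alpha)$ by pulling back along the finitely many iterates of $p$ that carry it to $\cA^{\circ}(\alpha)$. Since $\beta\notin P_p$, critical orbits avoid the modification region, so $F$ is a well-defined continuous self-map of $\Chat$ that equals $p$ on $\partial\cA(\alpha)\cup(\Chat\setminus\cA(\alpha))$; by construction $\mu_F$ is $F$-invariant.

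\emph{Dilatation estimate (the main obstacle) and conclusion.} The dilatation $K_F$ is uniformly bounded on compact subsets of $\cA(\alpha)\setminus\overline{\{F^{-n}(\beta):n\ge 0\}}$, but it blows up along sequences tending to $\beta$ because Koenigs-type (logarithmic) and Fatou-type (reciprocal) coordinates are not quasiconformally equivalent near the shared fixed point. The core technical step is to prove the inequality $\mathrm{Area}\{|\mu_F|>1-\epsilon\}\le Me^{-\alpha/\epsilon}$: in Fatou coordinates the $n$-th pullback of the interpolation region is a translate of a fixed strip, the local dilatation grows at most linearly in $n$, and the preimage strips sit in a horoball at $\beta$ of Euclidean radius shrinking like $1/n$, so their Euclidean areas decay at a parabolic polynomial rate; summing geometric contributions across dyadic ranges of $K$ converts this into the desired exponential area bound. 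Granted this estimate, Theorem \ref{Thm:David_Integration} supplies $\phi$, and the composition $q=\phi\circ F\circ\phi^{-1}$ has $\mu_q\equiv 0$ by $F$-invariance of $\mu_F$, hence is conformal by Weyl's lemma on the open set where the dilatation is bounded and by the uniqueness clause of David integration elsewhere. Properties (a)--(c) now read off: $\phi\circ p=q\circ\phi$ on $\Chat\setminus\cA(\alpha)$ because $F=p$ there; $\phi(\cA^{\circ}(\alpha))=\cA^{\circ}(\phi(\beta))$ by $F$-equivariance, so $\phi(K(p))=K(q)$; the multiplier $q'(\phi(\beta))=+1$ is inherited from the model $G$; and $\phi$ is conformal on the interior of $\Chat\setminus\cA(\alpha)$ because $\mu_F$ vanishes on that set.
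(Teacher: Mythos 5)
Your overall architecture is the right one and matches Ha\"issinsky's construction as presented in the paper (and in Branner--Fagella, Section 9.3): keep $p$ outside $\cA(\alpha)$, replace the repelling germ at $\beta$ by a parabolic germ along the chosen access via the mismatch between linearizing and Fatou coordinates, build an invariant Beltrami form, integrate it with the David Integrability Theorem, and straighten using the uniqueness clause (not Weyl, which indeed fails for David maps) to get $q$, normalized at $\infty$. The genuine gap is in the one step you yourself flag as the main obstacle: the verification of the David area condition. The Beltrami form $\mu_F$ lives on the \emph{domain} sphere, where $\beta$ is still a repelling fixed point of $p$; the quadrilaterals $Q_m$ on which $K\asymp m$ are the $m$-fold pullbacks under the repelling dynamics of a fixed fundamental quadrilateral in the sector at $\beta$, so by Koebe distortion their areas decay \emph{geometrically}, like $|p'(\beta)|^{-2m}$, and this is exactly what yields $\mathrm{Area}\{K_\mu>m\}\le C e^{-cm}$. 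Your description --- strips sitting in a horoball of radius $\sim 1/n$ with ``parabolic polynomial'' area decay --- is the geometry of the \emph{image} side, after $\beta$ has become parabolic, i.e.\ the geometry relevant to $\phi^{-1}$, not to $\phi$; and your closing claim that polynomial area decay with $K\asymp n$ can be ``converted'' into an exponential bound by summing over dyadic ranges is false: it gives only $\mathrm{Area}\{K_\mu>K\}\lesssim K^{-a}$ for some $a$, which does not satisfy the David condition. This domain/image asymmetry (exponential on the repelling side, polynomial on the parabolic side) is precisely why the inverse of a David map need not be David and why the surgery runs only in the direction attracting-to-parabolic; as written, your estimate would not let you invoke the integrability theorem.

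A second omission: $F$-invariance forces the non-conformality to sit not only in the sector at $\beta$ but in a sector at \emph{every} iterated preimage $y$ of $\beta$ (infinitely many points, accumulating on $J(p)$), and the area condition must be verified for the union: one sums the estimate over all these sectors, using Koebe distortion for the inverse branches (univalent here, since $\beta\notin P_p$ guarantees no critical orbit passes through $\beta$) and the finiteness of the total area of the union. This is the content of the paper's Lemma~\ref{Lem:Lem} (after BF, Lemma 9.23) and is absent from your argument, which only discusses the strips at $\beta$ itself. Relatedly, $\beta\notin P_p$ does not mean the critical orbits avoid the modification region inside $\cA(\alpha)$ (the immediate basin contains a critical point); its role is to keep the critical orbits away from $\beta$ so that the backward branches along the sectors are unramified and the distortion argument applies. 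The remaining steps of your proposal (straightening via uniqueness of the integrating map, removability at the critical points, normalization so that $q$ is a polynomial of degree $d$, and reading off (a)--(c)) are consistent with the paper's Lemma~\ref{Lem:Straightening_Haissinsky}.
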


\begin{xrem} Note that the construction of the above surgery procedure is local (along the given access) and can be performed on rational functions that are not necessarily polynomials. By a slight change of the proof, this theorem can be stated replacing the attracting fixed point by several cycles and the repelling point by cycles such that their periods divide those of the attracting points related to them, providing the repelling points that are to become parabolic are not accumulated by the recurrent critical orbits of $p$. In particular, it is possible to apply surgery when some critical point eventually lands at the repelling fixed point that is going to become parabolic. We shall go through the whole process of the proof following \cite[Section 9.3]{BF} but providing more details.
\end{xrem}

We shall use the following classical result on lifting property of covering maps \cite[Section~1.3]{Hatch}.
\begin{lem}
\label{Lem:Lifting}
Let $Y,Z$, and $W$ be path-connected and locally path-connected Hausdorff spaces with base points $y\in Y, z\in Z$, and $w\in W$. Suppose $p:W\to Y$ is an unbranched covering and $f:Z\to Y$ is a continuous map such that $f(z)=y=p(w)$:
\[\begin{diagram}[heads=LaTeX,l>=3em]
Z,z &\rTo^{\tilde f} & W,w\\
& \rdTo_{f} & \dTo_{p}\\
& & Y,y
\end{diagram}\]
There exists a continuous lift $\tilde f$ of $f$ to $p$ with $\tilde f (z)=w$ for which the above diagram is commutative i.e. $f=p\circ \tilde f$ if and only if \[ f_*(\pi_1(Z,z)) \subset p_* (\pi_1(W,w)), \] where $\pi_1$ denotes the fundamental group. This lift is unique if it exists.
\end{lem}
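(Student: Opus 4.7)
The plan is to give the standard algebraic-topology construction, since this is exactly the classical lifting criterion for covering spaces from Hatcher.

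First I would dispose of necessity at once: if a continuous lift $\tilde f$ with $p\circ\tilde f=f$ and $\tilde f(z)=w$ exists, then by functoriality of $\pi_1$ we have $f_{*}=p_{*}\circ\tilde f_{*}$, so $f_{*}(\pi_1(Z,z))\subset p_{*}(\pi_1(W,w))$ immediately.

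For sufficiency, the idea is to define $\tilde f$ pointwise via path lifting. Given any $z'\in Z$, use path-connectedness to choose a path $\gamma$ in $Z$ from $z$ to $z'$. Then $f\circ\gamma$ is a path in $Y$ from $y$ to $f(z')$, and since $p$ is an unbranched covering it enjoys the unique path-lifting property, so there is a unique lift $\widetilde{f\circ\gamma}$ of $f\circ\gamma$ to $W$ starting at $w$. Set $\tilde f(z'):=\widetilde{f\circ\gamma}(1)$. The key well-definedness step is to check that this value does not depend on the choice of $\gamma$: if $\gamma'$ is another path from $z$ to $z'$, then $\gamma\cdot\overline{\gamma'}$ is a loop at $z$, and its image under $f$ represents a class in $f_{*}(\pi_1(Z,z))$, which by hypothesis lies in $p_{*}(\pi_1(W,w))$. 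Hence $f\circ(\gamma\cdot\overline{\gamma'})$ is path-homotopic (rel endpoints) to a loop of the form $p\circ\delta$ where $\delta$ is a loop at $w$. Invoking the homotopy-lifting property of covering maps and the fact that endpoints of lifts are preserved under such homotopies, the lifts of $f\circ\gamma$ and $f\circ\gamma'$ starting at $w$ end at a common point.

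It then remains to verify continuity of $\tilde f$ and uniqueness. For continuity, given $z'\in Z$, pick an evenly covered open neighborhood $V$ of $f(z')$ in $Y$, let $\tilde V$ be the sheet of $p^{-1}(V)$ containing $\tilde f(z')$, and use local path-connectedness of $Z$ to choose a path-connected neighborhood $U$ of $z'$ with $f(U)\subset V$; on $U$ the formula $\tilde f=(p|_{\tilde V})^{-1}\circ f$ is forced by concatenating paths and therefore $\tilde f$ is continuous there. Uniqueness follows because two lifts that agree at $z$ agree on a set which is both open and closed in the connected space $Z$, using the local homeomorphism property of $p$ together with the Hausdorff condition on $W$. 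The only real subtlety is the well-definedness step, where the algebraic hypothesis on fundamental groups is converted into an equality of endpoints via homotopy lifting; everything else is formal.
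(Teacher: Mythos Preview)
Your proposal is correct and is precisely the classical argument from Hatcher, Section~1.3, which is exactly what the paper cites; the paper does not supply its own proof of this lemma but merely states it as a known result with that reference. There is nothing to add or correct.
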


We must mention that parabolic surgery is not directly applicable to superattracting domains. In order to overcome this difficulty, we need to change these domains to (non-super)attracting ones. 	

\begin{lem}\label{Lem:GettingPCM}
Let $N_p$ be a postcritically finite Newton map of degree $d\ge3$ with a superattracting fixed point $\xi$. Let its basin $\cA(\xi)$ with a marked invariant access $\Delta^+_1$ to $\infty$ in the immediate basin $\cA^{\circ}(\xi)$ be given. Then there exist a Newton map $N_{\hat p}$ of degree $d$ and a quasiconformal homeomorphism $\phi:\Chat\to\Chat$ such that
\begin{enumerate}[(a)]
\item $\phi\circ N_p=N_{\hat p}\circ\phi$ except on a compact set in $\cA^{\circ}(\xi)$, in particular, $\phi(F(N_p))=F(N_{\hat p})$ and $\phi(J(N_p))=J(N_{\hat p})$;
\item $\phi(\xi)$ is an attracting fixed point of $N_{\hat p}$ with the multiplier $N^\prime_{\hat p}(\phi(\xi))=\frac12$, and its immediate basin $\cA^\circ(\phi(\xi))=\phi(\cA^\circ(\xi))$ contains a single critical point;
\item $\phi$ is conformal except on $\cA(\xi)$;
\item $\phi(\Delta^+_1)$ is a marked invariant access in $\cA^\circ(\phi(\xi))$;
\item critical orbits in the Julia set and in superattracting basins are finite;
\item all critical points in basin of $\phi(\xi)$ are in \emph{minimal critical orbit relations}: if $c$ is a critical point in a strictly preperiodic component $U$, of preperiod $m\ge 1$, of the basin of $\phi(\xi)$, then $N_{\hat p}^{\circ m}(c)$ is a critical point in $\cA^\circ(\phi(\xi))$.
\end{enumerate}
\end{lem}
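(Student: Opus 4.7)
The plan is a classical Shishikura-style quasiconformal surgery, combined with the R\"uckert--Schleicher characterisation (Theorem~\ref{Thm:Theorem_Newton}) to identify the outcome as a Newton map of a polynomial. Inside a compact neighbourhood of $\xi$ contained in the immediate basin $\cA^{\circ}(\xi)$ we replace the superattracting dynamics of $N_p$ by an explicit Blaschke model with an attracting fixed point of multiplier $1/2$, glue the two maps quasiconformally across a thin annular collar, and then straighten the resulting quasiregular map via the Measurable Riemann Mapping Theorem.

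More precisely, let $k-1\ge 1$ be the multiplicity of $\xi$ as a critical point of $N_p$, and choose a degree-$k$ Blaschke product $B:\overline{\D}\to\overline{\D}$ with $B(0)=0$, $B'(0)=1/2$, fixing $z=1$ as a repelling boundary point, and having a unique critical point $c_B\in\D\setminus\{0\}$ of multiplicity $k-1$ with $B(c_B)=0$; such a $B$ exists by an explicit computation. Let $\Phi_\xi:\cA^{\circ}(\xi)\to\D$ be the B\"ottcher coordinate of $N_p$ at $\xi$, normalised so that the marked invariant access $\Delta^+_1$ corresponds to the radial segment $[0,1)$. For $0<r_1<r_2<1$ define
\[
F(z):=\begin{cases}N_p(z), & z\notin\Phi_\xi^{-1}(\{|w|\le r_2\}),\\ \Phi_\xi^{-1}\bigl(B(\Phi_\xi(z))\bigr), & z\in\Phi_\xi^{-1}(\{|w|\le r_1\}),\end{cases}
\]
with a smooth quasiconformal interpolation on the collar $\Phi_\xi^{-1}(\{r_1<|w|<r_2\})$ that makes $F$ a degree-$k$ branched covering there. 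The resulting $F:\Chat\to\Chat$ is a quasiregular branched cover of degree $d$, holomorphic off the collar and equal to $N_p$ outside the compact set $K:=\Phi_\xi^{-1}(\{|w|\le r_2\})\subset\cA^{\circ}(\xi)$.

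Next, pull back the standard conformal structure on $\Chat$ under forward iterates of $F$ to obtain an $F$-invariant Beltrami form $\mu$, supported on $\cA(\xi)$; the inequality $\|\mu\|_\infty<1$ holds because every $F$-orbit crosses the interpolation collar at most once before being absorbed into the holomorphic $B$-region. The Measurable Riemann Mapping Theorem yields a quasiconformal homeomorphism $\phi:\Chat\to\Chat$, normalised by prescribing three points of $J(N_p)$ and fixing $\infty$, with $\mu_\phi=\mu$; then $N_{\hat p}:=\phi\circ F\circ\phi^{-1}$ is rational of degree $d$. The multipliers at the finite fixed points are unchanged from $N_p$ except at $\phi(\xi)$, where $N_{\hat p}'(\phi(\xi))=B'(0)=1/2=(2-1)/2$; since $\infty$ is still a repelling fixed point of the expected multiplier, Theorem~\ref{Thm:Theorem_Newton} identifies $N_{\hat p}$ as the Newton map of a polynomial $\hat p$ of degree $d$.

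Properties (a) and (c) are immediate from $F=N_p$ off $K$ and $\mu_\phi=0$ off $\cA(\xi)$, while (b) is built into the choice of $B$. Property (d) follows from Proposition~\ref{Thm:access}: the new immediate basin $\cA^{\circ}(\phi(\xi))$ contains the single critical point $\phi(c_B)$, hence has a unique invariant access, and by the normalisation of $\Phi_\xi$ this access is precisely $\phi(\Delta^+_1)$. Property (e) holds because the conjugation $\phi\circ N_p=N_{\hat p}\circ\phi$ off $K$ preserves critical orbit combinatorics on $J(N_p)$ and on the other superattracting basins. The principal technical point is (f): the conjugation off $K$ forces $N_{\hat p}^{\circ m}(\phi(c))=\phi(\xi)$ for every preperiodic critical point $c\in\cA(\xi)\setminus\cA^{\circ}(\xi)$ of preperiod $m$, whereas the minimal critical orbit relation demands that this equal $\phi(c_B)$. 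The relation $B(c_B)=0$ arranges that $\phi(c_B)$ is exactly the non-fixed preimage of $\phi(\xi)$ inside $\cA^{\circ}(\phi(\xi))$, and a further adjustment of the B\"ottcher normalisation together with the quasiconformal interpolation on $\cA^{\circ}(\xi)$ is needed to route the first-entry landing point of each preperiodic critical orbit onto $\phi(c_B)$ rather than onto $\phi(\xi)$; carrying out this matching while preserving integrability of the Beltrami form is the main delicate step of the proof, after which the remaining assertions of (e) and (f) are routine.
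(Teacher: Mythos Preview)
Your overall strategy---replace the superattracting model $z^{k}$ in B\"ottcher coordinates by a Blaschke product with attracting multiplier $\tfrac12$, interpolate quasiconformally on an annular collar, pull back an invariant Beltrami form, and straighten via the Measurable Riemann Mapping Theorem---is precisely the paper's approach. The difference lies in the choice of Blaschke model, and this is where your proposal runs into trouble.

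First, your model is inconsistent: a degree-$k$ Blaschke product $B$ with a \emph{simple} fixed point at $0$ (so $B(0)=0$, $B'(0)=\tfrac12\neq 0$) and a critical point $c_B\in\D\setminus\{0\}$ of maximal multiplicity $k-1$ cannot also satisfy $B(c_B)=0$. Indeed $c_B$ would then be a $k$-fold preimage of $0$, and together with the simple preimage at $0$ itself this gives at least $k+1$ preimages of $0$ counted with multiplicity, contradicting $\deg B=k$. Second, and more fundamentally, placing the attracting \emph{fixed point} at the B\"ottcher origin makes property~(f) unrepairable. A preperiodic critical point $c\in U$ of preperiod $m$ satisfies $N_p^{\circ m}(c)=\xi$; since your $F$ agrees with $N_p$ on every step of this orbit (the first $m-1$ iterates lie outside $\cA^{\circ}(\xi)$, and the $m$-th step is applied at a point outside $K$), one gets $F^{\circ m}(c)=\xi$ and hence $N_{\hat p}^{\circ m}(\phi(c))=\phi(\xi)$. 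This landing point is dictated by $N_p$ on the preperiodic components, entirely \emph{outside} the surgery region; no ``adjustment of the B\"ottcher normalisation together with the quasiconformal interpolation on $\cA^{\circ}(\xi)$'' can move it. Your proposed rerouting therefore cannot work.

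The paper makes the opposite choice: its model family is $B_b(z)=\dfrac{z^k+b}{1+bz^k}$, which has its \emph{critical point} of maximal multiplicity at $0$ and its attracting fixed point at some $\alpha_0\in(0,1)$ (with $b$ tuned so that $B_b'(\alpha_0)=\tfrac12$). Pinning the critical point of the model to the B\"ottcher origin is exactly what is needed so that the preperiodic critical orbits of $N_p$, which land at $\xi$, continue to land at a critical point of the new map; then (f) comes for free with no rerouting. In short: let the fixed point move and keep the critical point at the origin, not the other way around.
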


\begin{xrem}
	The lemma is still valid if we consider several basins instead of one, we just need to work one by one with all marked basins. We change the multiplier to be $\frac12$ with the goal to obtain the Newton map of some polynomial $\hat p$. But this choice is purely artificial; in fact, it is possible to change the multiplier to be any non-zero $\lambda \in \D^*$. The lemma assures that not only is the multiplier changed at a superattracting fixed point, but we can also keep the marked invariant access unchanged.
\end{xrem}

\begin{proof}[Proof of Lemma~\ref{Lem:GettingPCM}]
The proof we supply is similar to that of the straightening of polynomials-like mappings (see \cite[Chapter 1,
Theorem 1]{DH}).
Let $N_p$ be a postcritically finite Newton map of degree $d\ge3$. Let $\xi$ be its superattracting fixed point with its basin $\cA(\xi)$. Fix a single marked invariant access $\Delta^+_1$ in the immediate basin $\cA^{\circ}(\xi)$. By Proposition~\ref{Thm:Characterization_PCM} the point $\xi$ is the only critical point in $\cA^\circ(\xi)$. Its local dynamics is conjugate to $z\mapsto z^{d_{\xi}}$, where $d_{\xi}>1$ is the multiplicity of $\xi$ as a critical point of $N_p$. Let $\psi:\cA^\circ(\xi)\to\D$ be a Riemann map (B\"ottcher coordinate) such that $\psi(\xi)=0$. It is unique up to rotation; normalize it so that the fixed ray corresponding to the marked invariant access $\Delta^+_1$ maps to $\left[0,1\right]$, which we call the ``zero ray". We want to change the multiplier at the fixed point to $\frac12$ so that we obtain a Newton map of some other polynomial $\hat p$. Then the polynomial $\hat p$ should have a single double root (other roots are simple) and thus its degree is $d+1$.
	 	
For a given $ 0\le b<\frac{k-1}{k+1}$, the Blaschke product $B_b(z)=\frac{z^k+b}{1+b z^k}$, considered as a proper self map of $\D$, has a unique attracting fixed point $\alpha \in [0,1)$, which attracts every point in $\D$. Note that $b<\alpha$, otherwise if $b\ge \alpha$ then $\alpha^k+b\ge\alpha^k +\alpha$ this yields $\alpha(1+b \alpha^k)\ge\alpha^k +\alpha$ since $\alpha$ is a fixed point. After cancellations, we obtain $b\alpha\ge1$, a contradiction. The multiplier at $\alpha$ depends continuously on $b$. When $b=0$ the Blaschke product is $z\mapsto B_0(z)=z^k$, then $B_0'(0)=0$. If $b\nearrow \frac{k-1}{k+1}$ then the multiplier at $\alpha$ converges to 1; in particular, $\alpha$ also converges to 1. (It can be shown that the multiplier map is an increasing function of $b$ in the interval $[0,\frac{k-1}{k+1}]$, but this is not relevant for us.) We choose $b=b_0$ such that the multiplier at the fixed point $\alpha=\alpha_0$ is $1/2$, i.e $B_{b_0}'(\alpha_0)=1/2$. To ease notations, let us drop the index $b_0$ in $B_{b_0}$ and simply call it $B$.

Fix $\alpha_0<r<1$. Set $\D_r=\{|z|<r\}$ and $\S_r^1=\{|z|=r\}$. Then $B_0^{-1}(\D_r)=\D_{r^{1/k}}$ and $B_0^{-1}(\S_r^1)=\S^1_{r^{1/k}}$. Denote $A_0=\overline{\D_r}\setminus \D_r^k$ and $A_1=\overline{\D_r}\setminus B(\D_r)$ closed annuli. Note that $B(\D_r)$ is a disk and $M_b(\D_r^k)=B(\D_r)$, for the M\"obius map $M_b(z)=\frac{z+b}{1+bz}$ with $b=b_0$. Moreover, $M_b$ is biholomorphism (the Riemann map) from $\D_r^k$ to $B(\D_r)$. Consider $M_b:S^1_{r^k}\to M_b(S^1_{r^k})$, and lift it to the map $\psi:S^1_r\to S^1_r$ such that $M_b\circ z^k=B\circ\psi$ and fixing $r$. Then $\psi=\text{id}$. Using \cite[Proposition~2.30]{BF}, interpolate $\text{id}:\S_r^1\to\S_r^1$ and $M_b:S^1_{r^k}\to M_b(S^1_{r^k})$ to obtain a quasiconformal map $h$ between $A_0$ and $A_1$. Define a quasiregular function (a composition of holomorphic and quasiconformal map) $g:\D\to\D$ as follows
 \[g(z) = \left \{
 \begin{array}{ll}
 	\mbox{$z^k$,} & \mbox{$z\in \D\setminus \D_r$,}\\
 	\mbox{$M^{-1}_b\circ B\circ h(z)$,} & \mbox{$z\in A_0$,}\\
 	\mbox{$M^{-1}_b B\circ M_b(z)$,} & \mbox{$z\in \overline{\D}_{r^k}.$}
 \end{array} \right. \]

\begin{figure}[h!]
	\centering
	\includegraphics[scale=.5]{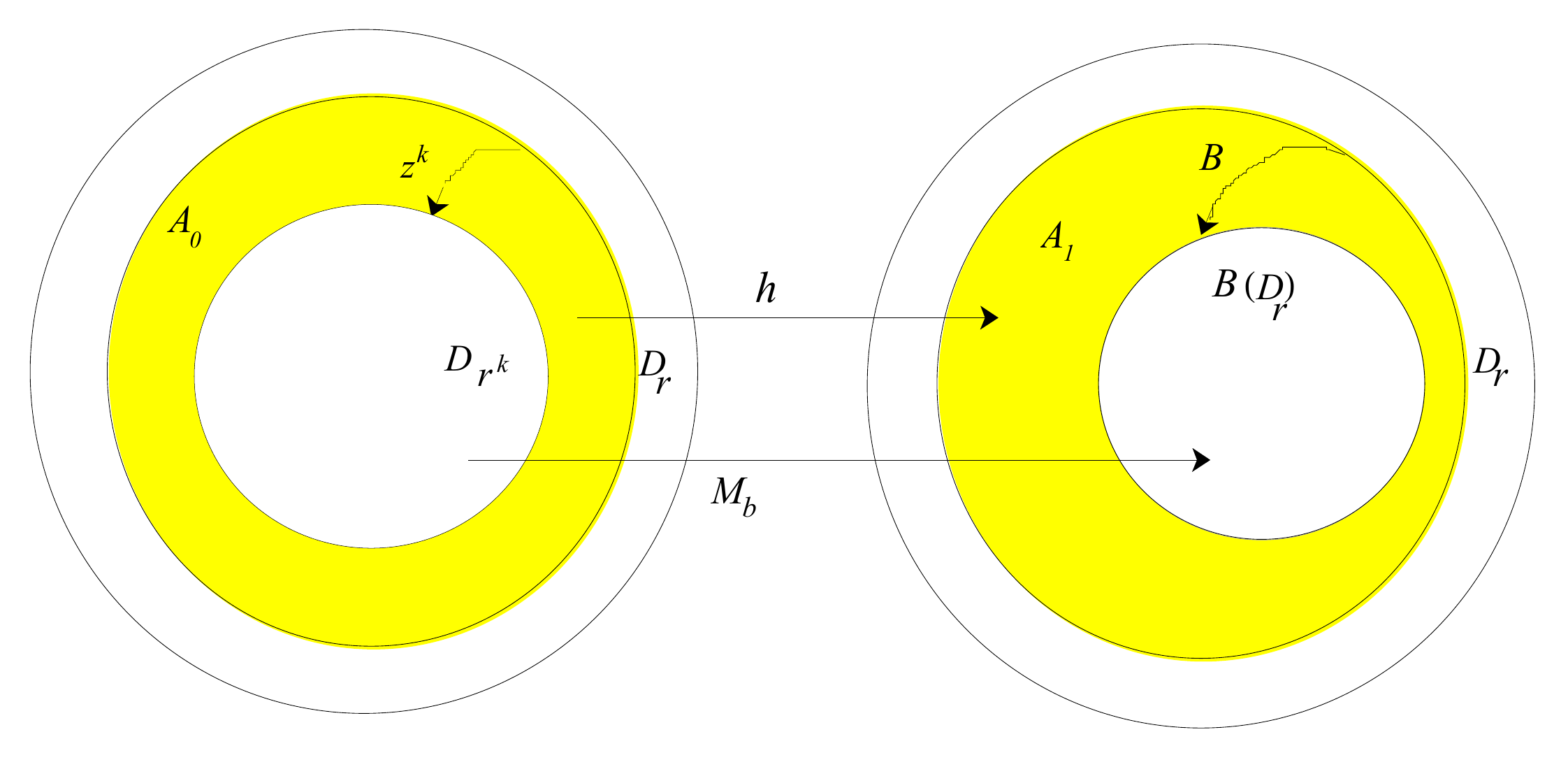}
	\caption{Surgery on the unit disk. Two outer disks are the unit disks, the two annuli in yellow are $A_0$ and $A_1$ and the inner white disks are $\D_{r^k}$ and $B(\D_r)$ respectively.}
	\label{Fig:surgery}
\end{figure}

If we simplify the above formula for $g$, we obtain
 \[g(z) = \left \{
 \begin{array}{ll}
 	\mbox{$z^k,$} & \mbox{$z\in \D\setminus \D_r$,}\\
 	\mbox{$h^k(z),$} & \mbox{$z\in A_0$,}\\
 	\mbox{$M_b^k(z),$} & \mbox{$z\in \overline{\D}_{r^k}.$}
 \end{array} \right. \]
Fig.~\ref{Fig:surgery} shows how we cut and past new dynamics in the unit disk.

 Note that $g(M_{b_0}^{-1}(\alpha_0))=M_{b_0}^{-1}(\alpha_0)$ and $z=-b_0$ is a critical point of maximal multiplicity $k-1$. Also note that the interval $\left[r^{1/k},1\right]$ is invariant. Since $z=1$ is a repelling fixed point for $B_0$, the interval $\left[r^{1/k},1\right]$ is contained in the marked invariant access coming from $B_0$ and is invariant for $g$ by the construction. We shall construct $\mu$, a $g$-invariant Beltrami form. Let $\mu_0$ be the conformal structure, define $\mu$ on $\D_{r^k}$ as $\mu=\mu_0$. In the annulus $A_0$ define it as a pullback by $h^k$, i.e. $\mu=(h^k)^*\mu_0$. Finally, we extend $\mu$ to the rest of $\D$ by the dynamics of $g$, observing that for every $z\in\D\setminus \D_r$, there is a unique $n \geq 0$ such that $g^{\circ n}(z)$ belongs to the half open annulus $A_0\setminus \S_{r^k}^1$; moreover, $g^{\circ n}(z)=B_0^{\circ n}(z)$. Hence $\mu$ is defined recursively as
 \[\mu = \left \{
 \begin{array}{ll}
 \mbox{$\mu_0$} & \mbox{on $ \D_r$},\\
 \mbox{${(h^k)}^*\mu_0$} & \mbox{on $A_0$}, \\
 \mbox{$(B^n_0)^*\mu$} & \mbox{on $B^n_0(A_0).$}
 \end{array} \right. \]
 By our construction, $\mu$ is $g$-invariant and its maximum dilatation satisfies $||\mu||=||((h)^k)^*\mu_0||<1$. 

Recall that $\psi:\cA^\circ(\xi)\to\D$ is the Riemann map. 
 Let us transport $g$ to the immediate basin $\cA^{\circ}(\xi)$ of $N_p$ and define a continuous function  \[F(z) = \left \{
 \begin{array}{ll}
 \mbox{$\psi^{-1}\circ g\circ\psi(z)$,} & \mbox{$z\in\cA^{\circ}(\xi)$},\\
 \mbox{$N_p(z),$} & \mbox{$z\not\in\cA^{\circ}(\xi).$}
 \end{array} \right. \]
 Note that $F(z)=N_p(z)$ for $z\in \psi^{-1}(\D\setminus \D_r)$.
 We will now define an $F$-invariant Beltrami form $\mu_F$ in $\Chat$. In the immediate basin $\cA^\circ(\xi)$ define $\mu_F$ as the pull back of $\mu$ by the Riemann map $\psi:\cA^\circ(\xi)\to\D$; for other components of the basin $\cA(\xi)$ we spread it by the dynamics of $F$. We put the standard complex structure on the complement of $\cA(\xi)$, thus obtaining an $F$-invariant Beltrami form $\mu_F$. We apply the Measurable Riemann Mapping Theorem \cite[Theorem~1.28]{BF} to $\mu_F$ deducing a quasiconformal homeomorphism $\phi:\Chat\to\Chat$, unique up to an automorphism of $\Chat$. The conjugation $\phi\circ F\circ \phi^{-1}$ is a rational function on $\Chat$. Let us normalize $\phi$ as to fix $\infty$. Then $\phi\circ F\circ \phi^{-1}$ is a Newton map of some polynomial, denote the polynomial by $\hat p$. Thus $N_{\hat p}=\phi\circ F\circ \phi^{-1}$ is the resulting function. The maps $N_p$ and $N_{\hat p}$ are conjugate in some neighborhoods of their Julia sets. The conjugating map transforms invariant accesses in $\cA^{\circ}(\xi)$ of $N_p$ to that of $N_{\hat p}$, so we obtain a marked invariant access of $N_{\hat p}$ corresponding to the marking $\Delta^+_1$. We still use the same notation $\Delta^+_1$ for the resulting access and channel diagram, even though the channel diagram is defined only for postcritically finite Newton maps. We have marked invariant accesses but the new marked channel diagram is not invariant under the new function, but it is invariant near $\infty$ this is what all we need for our application of parabolic surgery. By construction, all the conditions of the lemma are satisfied.
	 \end{proof}

\begin{rem}\label{Rem:Blaschke}
In the proof of the previous lemma, let us straighten the $g$ invariant conformal structure on $\D$, normalize it to get $\phi$ that fixes the origin. Set $E=\phi\circ g \circ \phi^{-1}$; then $E$ is a Blaschke product with an attracting fixed point $\alpha$ in $\D$, and the origin is its critical point of maximal multiplicity. Compose $\phi$ with a rotation about the origin so that the critical value of $E$ is real and positive. Since $\phi$ is conformal near $\alpha$, we have $E'(\alpha)=1/2$. It is easy to see that $E(z)=\frac{u z^k+b}{1+b u z^k}$, where $b>0$ is the critical value, and $|u|=1$. Then $1/\bar{\alpha}$ is also a fixed point of $E$ with the multiplier $\overline{E'(\alpha)}$. We obtain a system of equations, of which $\alpha$ and $1/\bar{\alpha}$ are the only solutions:
\begin{align}
\label{Eq:Bla-Att1}
\frac{u z^k+b}{1+b u z^k}&=z\\
\label{Eq:Bla-Att2}
\frac{k u z^{k-1}(1-b^2)}{(1+b u z^k)^2}&=\dfrac12
\end{align}

Solving \eqref{Eq:Bla-Att1} for $u$ and plugging this into \eqref{Eq:Bla-Att2}, we obtain the following quadratic equation \[k(z-b)(1-z b)=\dfrac12 z(1-b^2).\]
By assumption, the last equation should have solutions $\alpha$ and $1/\bar{\alpha}$. This implies that $\alpha>0$, in particular, it is real. Then $u$ is also real and positive, which results to $u=1$. Thus $E(z)=\frac{z^k+b}{1+b z^k}$. By assumption $z=1$ is a repelling fixed point with the real multiplier, which implies that $0<b<\frac{k-1}{k+1}$.
\end{rem}
 
By application of Lemma~\ref{Lem:GettingPCM} to $N_p$ with its marked basins, we obtain a Newton map $N_{\hat p}$ and a quasiconformal map $\phi_0$ such that $\phi_0\circ N_p=N_{\hat p}\circ\phi_0$ except on a compact set.  Moreover, every marked immediate basin of $N_{\hat p}$ is now (non-super)attracting with a single critical point in it, and $N_{\hat p}$ is postcritically minimal on these marked basins, meaning that its critical orbits are in minimal critical orbit relations. 

The compositions $\phi\circ\psi$ and $\psi\circ\phi$ of a quasiconformal homeomorphism $\psi$ with a David homeomorphism $\phi$ are again David homeomorphisms, since a quasiconformal homeomorphism distorts area in a bounded fashion by Astala's theorem \cite{A}. It is enough to work with $N_{\hat p}$, but to simplify notations we still denote it by $N_p$. The relation is $\phi_0\circ N_p=N_{\hat p}\circ\phi_0$ except on a compact set contained in the union of marked immediate basins of $N_p$. 

It is easy to carry out the surgery for the Blaschke products on the unit disk and then transfer it to the marked immediate basins via Riemann maps. Let \[B(z)=\frac{z^k+b}{1+b z^k}\] be the Blaschke product of degree $k\ge2$ for $b=b_0$ as defined in the proof of Lemma~\ref{Lem:GettingPCM} (see also Remark~\ref{Rem:Blaschke}). Considered as a self map of the unit disk, $B(z)$ has a unique critical point of maximal multiplicity $k-1$ at the origin, and its Julia set is the unit circle. Moreover, $B(z)$ has an attracting fixed point $\alpha_0 \in \left(0,1\right)$ with $B'(\alpha_0)=\frac12$, which attracts every point in $\D$, while $z=1$ is a repelling fixed point with the real multiplier $\lambda =\frac{1-b}{1+b}k$. Recall that $P_k(z)$ has a double parabolic fixed point at $z=1$, which also attracts every point in $\D$. In both cases the interval $\left[0,1\right]$ is invariant, and the critical orbit marches monotonically along this segment towards $z=\alpha$ in the case of $B(z)$ and towards $z=1$ in the case of $P_k$. The interval $\left[0,1\right]$ corresponds to the dynamical access for $P_k(z)$.

Now we consider the local dynamics around the repelling fixed point. Let $f(z)=\lambda z$ with $\lambda>1$ be the repelling model map. We define the sector
	\[ S:=\left\{z\in\C\ |\ \theta\le \arg z\le 2\pi-\theta\ \text{and}\ 0<|z|<1/\lambda^m \right\},\]
for $m>m_0$, where $m_0$ is large, and $0<\theta<\pi$ (see Fig.~\ref{Fig:Quadrilateral}). We write $Q_m^f$ for the quadrilaterals bounded by the segments $[(1/\lambda^{m+1})e^{\pm i\theta},(1/\lambda^{m})e^{\pm i\theta}]$ and arcs of radii $1/\lambda^{m+1}$ and $1/\lambda^{m}$ contained in $S$ (see Fig.~\ref{Fig:Quadrilateral}). Note that $Q_m^f=f^{-\circ m}(Q_0^f)$. It is easy to observe that the map $z\mapsto \omega(z)=\frac{Log\lambda}{Log z}$ conjugates $f$ on $\D_{\lambda^{-m}}\setminus S$ to the parabolic model map
	\[g:\omega\mapsto \frac{\omega}{\omega+1}\]
	 on the cusp $C=g(\D_{\lambda^{-m}}\setminus S)$ with vertex at the origin, i.e. $\omega\circ f=g\circ\omega$ on $\D_{\lambda^{-m}}\setminus S$ (see Fig. \ref{Fig:Quadrilateral}).
	 
	\begin{figure}[htbp]
	 		\centering
	 	\includegraphics[scale=.28]{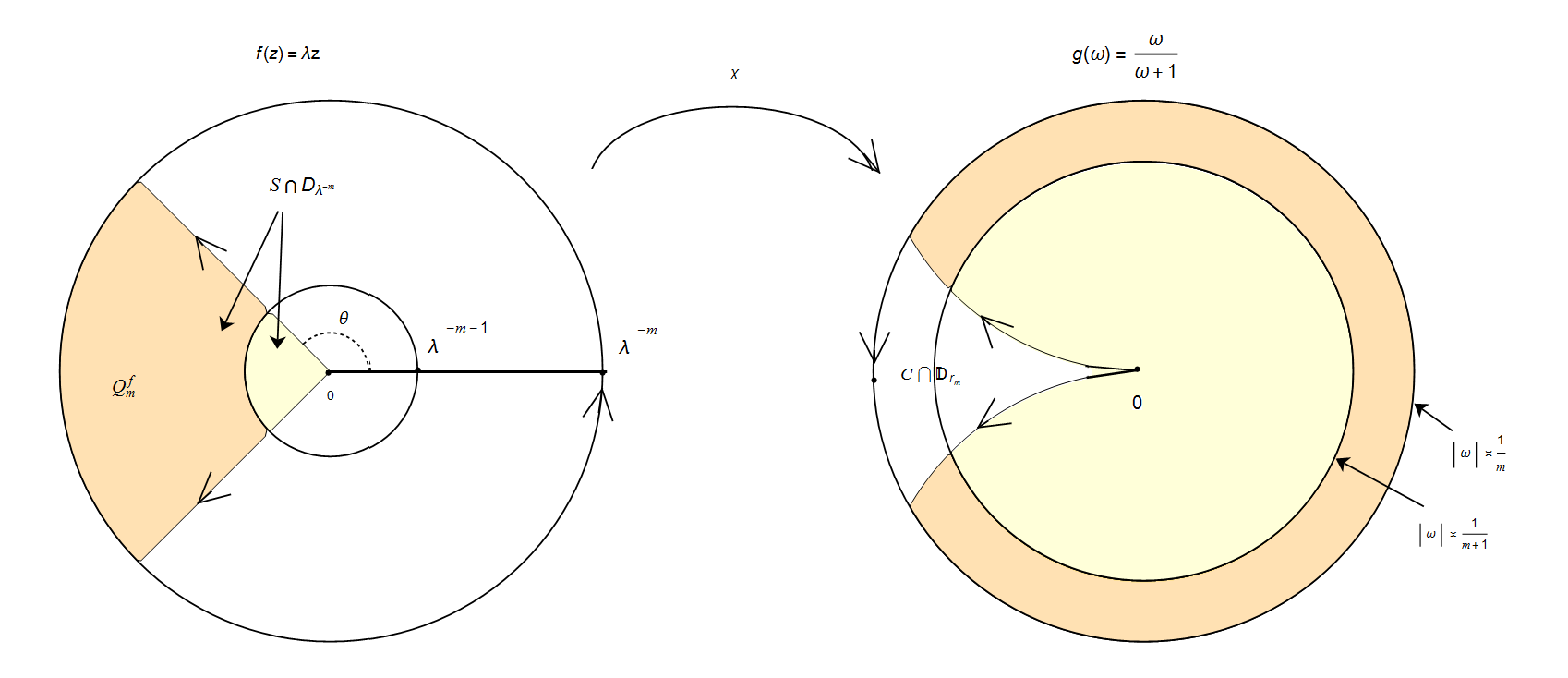}
	 	\caption[Illustration of the function $\chi$.]{Illustration of the function $\chi$. The shaded region on the left	corresponds to $S\cap\mathbf{D}_{\lambda^{-m}}$ and $r_m=|\omega(\lambda^{-m}e^{\pm i \theta})|\asymp\frac{1}{m}$. The initial map $z\mapsto\omega(z)$ sends the white region (the complement of $S$) on the left to the white region on the right.}
	 	\label{Fig:Quadrilateral}
	 \end{figure}
	
	We write
\[
a(t)\asymp b(t)
\]
for two positive valued functions $a(t)$ and $b(t)$ if there exists a constant $C>0$ such that $C^{-1}b(t)\le a(t) \le C b(t)$. 

	\begin{lem}
		\label{Lem:Quadrilateral}
		There exists a David extension $\chi$ of $z\mapsto \omega(z)$ to a neighborhood of the origin with $K_\chi\asymp m$ on $Q_m^f$.
	\end{lem}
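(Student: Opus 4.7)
The plan is to extend $\omega$ across the sector $S$ piecewise on each quadrilateral $Q_m^f$, matching $\omega$ on $\partial S$, and then verify both the dilatation estimate and the David area condition.

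I first determine the target geometry. The two radial sides $\{re^{\pm i\theta}:r\in[\lambda^{-m-1},\lambda^{-m}]\}$ of $\partial Q_m^f\cap\partial S$ map under $\omega(z)=\log\lambda/\log z$ to two short curves in the $\omega$-plane of diameter $\asymp 1/m^2$, sitting just below and above the negative real axis at distance $\asymp 1/m$ from the origin, along the boundary of the cusp $C$. These are the prescribed radial sides of the target quadrilateral $R_m=\chi(Q_m^f)$. To close $R_m$ inside the complement of $C$, I connect the two prescribed sides by arcs of the circles $|\omega|=1/m$ and $|\omega|=1/(m+1)$ going counterclockwise around the origin, producing a quadrilateral of angular extent $\asymp 2\pi$ and radial extent $\asymp 1/m$ in $\log|\omega|$, hence modulus $\asymp 1/m$.

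Next, I construct $\chi|_{Q_m^f}$ explicitly. In the conformal coordinates $(s,\phi)=(\log|z|,\arg z)$ on $Q_m^f$ and $(\sigma,\psi)=(\log|\omega|,\arg\omega)$ on $R_m$, both domains become Euclidean rectangles: the first of bounded aspect ratio $\log\lambda:(2\pi-2\theta)$, the second of aspect ratio $\asymp 1/m$. I take $\chi|_{Q_m^f}$ to be the affine map between these rectangles in the $(s,\phi)\to(\sigma,\psi)$ coordinates, adjusted in a boundary layer of width $O(1/m)$ near the two radial sides to match the prescribed values of $\omega$. The resulting map has Beltrami coefficient of constant modulus in the bulk, and its dilatation is precisely the ratio of moduli, giving $K_\chi\asymp m$ on $Q_m^f$. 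Continuity across the shared arc $|z|=\lambda^{-m}$ between $Q_m^f$ and $Q_{m-1}^f$ is automatic because both restrictions use the same angular reparametrization on the circle $|\omega|=1/m$; continuity along $\partial S$ with $\omega$ is built into the construction.

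Finally, I verify the David area condition. Any point with $K_\chi(z)>K$ lies in some $Q_m^f$ with $m\gtrsim K$; since each $Q_m^f$ has Euclidean area $\asymp\lambda^{-2m}$,
\[
\mathrm{Area}\{z:K_\chi(z)>K\}\lesssim\sum_{m\gtrsim K}\lambda^{-2m}\lesssim e^{-\alpha K}
\]
for some $\alpha>0$, which is the required exponential area decay. The main technical obstacle is the boundary-layer correction: the actual boundary curves $\omega(re^{\pm i\theta})$ are not circular arcs, so one must interpolate between the affine model and the exact boundary values, and verify that this correction does not spoil the $K_\chi\asymp m$ estimate. This holds because the correction is confined to a region of diameter $O(1/m^2)$ where the distortion is already of the same order as in the bulk.
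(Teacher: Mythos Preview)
The paper does not give its own proof of this lemma; it refers the reader to \cite[Lemma~9.20]{BF}. Your construction---pass to log-polar coordinates on both sides, map each $Q_m^f$ affinely to an annular quadrilateral $R_m$ of modulus $\asymp 1/m$ filling the complement of the cusp, and read off both $K_\chi\asymp m$ and the exponential area decay---is exactly the standard argument recorded there, so there is nothing substantive to compare.

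One small correction worth making: the closing circular arcs of $R_m$ should be taken at radii $r_m:=|\omega(\lambda^{-m}e^{\pm i\theta})|$ (which the paper notes is $\asymp 1/m$) rather than at exactly $1/m$, so that the four vertices of $R_m$ coincide with the $\omega$-images of the four corners of $Q_m^f$. With that choice the matching along $\partial S$ and along the shared arcs $|z|=\lambda^{-m}$ is automatic at the vertices, and the boundary-layer interpolation is needed only to absorb the $O(1/m)$ drift of $\arg\omega$ along each radial side; your claim that this correction lives in a region where the distortion is already $\asymp m$ is correct.
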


	\begin{lem}
		\label{Lem:Parabolic_Surgery_Disk}
		There exist a piecewise $C^1$ homeomorphism $\phi:\D\to\D$ and a sector $S_B\subset\D$ with vertex at $z=1$, which is a neighborhood of $\alpha$, such that
		\begin{enumerate}[(a)]
			\item  for all $z\in \D\setminus S_b$, $\phi\circ B(z)=P_k\circ\phi(z)$;
			\item there is a set $S'_B$, which is the intersection of $S_B$ with a neighborhood of $z=1$, such that $\phi:\D\setminus \bigcup_k B^{-\circ k}(S'_B)\to \phi(\D\setminus \bigcup_k B^{-\circ k}(S'_B))$ is a quasiconformal homeomorphism;
			\item on the quadrilaterals $Q_m^B$ in $S'_B$ defined as $Q_m^f$ for Lemma \ref{Lem:Quadrilateral}, we have $K_\phi\asymp m$ for all $m\ge m_0$.			
		\end{enumerate}
	\end{lem}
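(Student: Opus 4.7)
The strategy is to transplant Ha\"issinsky's parabolic surgery to the disk, constructing $\phi$ from three ingredients: Koenigs linearization $\Psi$ for the repelling fixed point $z=1$ of $B$, an attracting Fatou coordinate $\Phi$ for the parabolic fixed point $z=1$ of $P_k$, and the David homeomorphism $\chi$ supplied by Lemma~\ref{Lem:Quadrilateral}. Concretely, $\Psi$ conformally conjugates $B$ to $f(w)=\lambda w$ on a neighborhood $U_B$ of $1$ in $\overline{\D}$, $\Phi$ conformally conjugates $P_k$ to $g(\omega)=\omega/(\omega+1)$ on a cusp $C$ at $0$ in the immediate attracting petal at $z=1$ inside $\D$, and $\chi$ extends $\omega(w)=\log\lambda/\log w$ across the sector $S$ so as to conjugate $f$ to $g$, with $K_\chi\asymp m$ on $Q_m^f$ and conformality off $S$.

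The local surgery map is the composition
\[
\phi_0:=\Phi^{-1}\circ\chi\circ\Psi,
\]
defined on a small collar $U$ of $z=1$ in $\D$. Choose the sector $S\subset V$ symmetric about the ray through $\Psi(\alpha)$ with opening $2(\pi-\theta)$; since $f$ is a real dilation that preserves radial sectors, the pull-back $S_B:=\Psi^{-1}(S)$ is a sector at $z=1$ that is forward invariant under $B$, and after enlarging we may arrange $S_B$ to contain a neighborhood of $\alpha$. Set $S'_B:=S_B\cap U$. Unwinding the three commuting squares gives $\phi_0\circ B=P_k\circ\phi_0$ on $U\setminus S_B$, and $\phi_0$ is conformal off $S_B\cap U$ because $\Psi$ and $\Phi^{-1}$ are. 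The estimate $K_{\phi_0}\asymp m$ on $Q_m^B:=\Psi^{-1}(Q_m^f)\subset S'_B$ transfers directly from the estimate on $\chi$ of Lemma~\ref{Lem:Quadrilateral}, since pre- and post-composition with conformal maps does not alter dilatation.

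To extend $\phi_0$ to a homeomorphism $\phi:\D\to\D$ satisfying $\phi\circ B=P_k\circ\phi$ on $\D\setminus S_B$, we use equivariance backward along the dynamics. Every orbit of $B|_\D$ converges to $\alpha\in S_B$, so for each $z\in\D\setminus U$ there is a minimal $n=n(z)\ge 1$ with $B^{\circ n}(z)\in U$, and we define $\phi(z)$ as the unique $n$-fold $P_k$-preimage of $\phi_0(B^{\circ n}(z))$ selected by Lemma~\ref{Lem:Lifting} so that $\phi$ is continuous across successive fundamental annuli $R_n:=B^{-\circ n}(S_B)\setminus B^{-\circ(n-1)}(S_B)$. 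The lifting lemma applies to the unbranched covering $P_k^{\circ n}$ restricted to the complement of its critical values, after choosing $U$ large enough that the first fundamental annulus $R_1$ lies inside $U$ so that $\phi$ is already known there.

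Properties (a)--(c) then follow: (a) is the equivariance built into the extension; (b) holds because off $\bigcup_{k\ge 0} B^{-\circ k}(S'_B)$ every local piece of $\phi$ is a composition of the conformal part of $\phi_0$ with holomorphic branches of $P_k^{-1}$, hence quasiconformal; (c) follows since $\phi=\phi_0$ on $Q_m^B\subset S'_B$. The main obstacle is the consistency of the equivariant extension: one must verify that the branches of $P_k^{-\circ n}$ selected by Lemma~\ref{Lem:Lifting} paste into a single-valued, piecewise $C^1$ map across the shared boundaries $\partial R_n\cap\partial R_{n+1}$, and that the finitely many critical preimages $B^{-\circ k}(0)$ meeting each $R_n$ do not obstruct the lift. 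This reduces to the fact that $\D=\bigcup_{n\ge 0}B^{-\circ n}(S_B)$ is an increasing union matched annulus-by-annulus with the corresponding $P_k$-tiling of $\phi_0(U)\cup\bigcup_{n\ge 1}P_k^{-\circ n}(\phi_0(U))=\D$, a routine but delicate monodromy bookkeeping in which the simple connectedness of $\D$ and the choice of a single marked access (guaranteeing one distinguished dynamical ray into the parabolic petal) play the decisive role.
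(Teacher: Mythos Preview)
The paper does not actually prove this lemma; immediately after stating it the author writes ``For the proofs of the last two lemmas we refer to [BF, Lemmas 9.20 and 9.21].'' So there is no in-paper argument to compare against, and your sketch is in fact attempting more than the paper does. Your high-level strategy---Koenigs linearization at the repelling point, the David map $\chi$ of Lemma~\ref{Lem:Quadrilateral}, a Fatou-type coordinate on the parabolic side, and then dynamical pullback---is indeed the architecture of the Branner--Fagella proof.

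That said, your extension step has a real gap. You define $\phi_0=\Phi^{-1}\circ\chi\circ\Psi$ only on a \emph{small} collar $U$ of $z=1$ (this is forced: $\Psi$ is a Koenigs coordinate and is only available near the repelling fixed point), yet you then assert that ``for each $z\in\D\setminus U$ there is a minimal $n$ with $B^{\circ n}(z)\in U$.'' This is false in general: forward $B$-orbits in $\D$ converge to $\alpha$, which lies at a definite distance from $1$, and for instance the critical orbit $0\mapsto b\mapsto\cdots\to\alpha$ stays on $[0,\alpha]$ and never enters a small collar of $1$. You try to rescue this by ``enlarging'' $S_B$ to contain $\alpha$ and ``choosing $U$ large enough,'' but then $\Psi$, and hence $\phi_0$, is no longer defined on all of $U$, so $\phi_0(B^{\circ n}(z))$ is meaningless for the orbits that matter. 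In short, you have conflated two different sets: the small domain $U_B$ on which the Koenigs/Fatou composition is available, and the large forward-invariant sector $S_B\supset\{\alpha\}$ needed to make the pullback exhaust $\D$.

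What is missing is precisely the construction of $\phi$ on $S_B\setminus S'_B$, the compact part of the sector containing $\alpha$ and the critical point. In the Branner--Fagella argument this is handled by a separate quasiconformal interpolation (matching $\phi_0$ on the small end $S'_B$ to a suitable homeomorphism onto a large attracting petal for $P_k$); only once $\phi$ is defined on all of $S_B$ can one pull back along the tiling $R_n=B^{-\circ n}(S_B)\setminus B^{-\circ(n-1)}(S_B)$, which then genuinely exhausts $\D$. Your dilatation estimates and the conformality claim in (b) would follow once this piece is supplied, but as written the map $\phi$ is not defined on any neighborhood of $\alpha$.
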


	For the proofs of the last two lemmas we refer to \cite[Lemmas 9.20 and 9.21]{BF}.

	 \textbf{Topological surgery.} In order to ease the notations, let us do the construction for one basin only, and denote by $\xi$ an attracting fixed point and by $\cA^\circ$ its immediate basin with a single critical point $c_\xi$ of maximal multiplicity $k-1=\deg(N_p,c_\xi)-1$. Consider the Riemann map \[\psi=\psi_{\cA^\circ}:\cA^\circ\to\D\] such that $\psi(c_\xi)=0$; now we can still post-compose it with a rotation. Let \[E=\psi\circ N_p\circ \psi^{-1}:\D\to\D.\] By construction $E$ has a critical point of maximal multiplicity at the origin and has an attracting fixed point in $\D$ with the multiplier $1/2$. By Remark~\ref{Rem:Blaschke} we obtain $E(z)=\frac{z^k+b}{1+b z^k}$ for some $0<b<\frac{k-1}{k+1}$. Denote $B(z)=E(z)$, then $\psi(\xi)$ is a fixed point of $B$. 

Consider the map $\phi:\D\to\D$ given by Lemma~\ref{Lem:Parabolic_Surgery_Disk}, which is a partial conjugacy between $B$ and $P_k$. Let $\hat{B}= \phi^{-1}\circ P_k\circ \phi$ be the conjugation. Hence, $\hat{B}=B$ except on the sector $S_B$. We transfer this data to the immediate basin: define $F=\psi^{-1}\circ \hat{B}\circ \psi$ on $\cA^\circ$ which coincides with $N_p$ except on the sector $\psi^{-1}(S_B)$.
	
	 Finally, set
	 \[G(z) = \left \{
	 \begin{array}{ll}
	 F(z), & z \in \cA^\circ,\\
	 N_p(z), & z\not\in \cA^\circ.
	 \end{array} \right. \]
	 This map $G$ is our topological model: a ramified covering of degree $d$ and piecewise $C^1$.
	 We have to ensure that all critical points of $G$ satisfy minimal critical orbit relations; we know that $N_p$ does by construction. Let $U$ be a component of the basin $\cA$, of preperiod $m_U\ge1$ i.e. $G^{\circ m_U}(U)=N^{\circ m_U}_p(U)=\cA^\circ$, and let $c_U\in U$ be a critical point in $U$, the center of $U$. Recall that by construction $G^{\circ m_U}(c_U)=N^{\circ m_U}_p(c_U)=c_\xi$, where $c_\xi$ is a critical point of $N_p$ in $\cA^\circ$, its center. By the definition of the Riemann map $\psi=\psi_{\cA^\circ}:\cA^\circ\to\D$ with $\psi(\xi)=\alpha$ and $\psi(c_\xi)=0$, we have 
\[F=\psi^{-1}\circ \hat{B}\circ \psi=\psi^{-1}\circ\phi^{-1}\circ P_k\circ \phi\circ \psi.\]
 It is easy to observe that $B^{-1}(b)=P_k^{-1}(a)=\{0\}$. From $\phi(B(0))=\phi(b)=a=P_k(0)$, it follows that $\phi(0)=0$. We have $F(c_\xi)=\psi^{-1}(b)$ and $F^{-1}(\psi^{-1}(b))=\{c_\xi\}$; hence, $c_\xi$ is a critical point of $F$. This is a crucial property of our model map: after applying the straightening theorem, we obtain a postcritically minimal Newton map.
	
\textbf{Straightening of almost complex structure.} We will define a $G$-invariant almost complex structure $\mu$ in $\Chat$. Let $\hat{\mu}=\partial_{\bar{z}}\phi/\partial_{z}\phi$, which is a David--Beltrami form. It is defined on $\D$ and invariant under $\hat{B}$. We transport it to the immediate basin $\cA^\circ$ by defining the pull back $\mu=\psi^*\hat{\mu}$.
	 We have the commutative diagram
	
	 \[\begin{diagram}[heads=LaTeX,l>=3em]
	 (D,\mu_0)        &\rTo^{B_{par}}   & (D, \mu_0)\\
	\uTo^{\phi} &         & \uTo_{\phi}\\
	 (D, \hat{\mu})        &\rTo^{\hat{B}}   & (D, \hat{\mu})\\
	\uTo^{\psi}  &        & \uTo_{\psi}\\
	(\cA^\circ, \mu)   &\rTo^{F}   & (\cA^\circ, \mu)
	 \end{diagram}\]

We extend $\mu$ recursively by dynamics of $F$ (which is equal to $N_p$ outside of the sector $\psi^{-1}(S_B)$) to the rest of the dynamical plane as

\[
\mu=\left\{
\begin{array}{ll}
(N_p^{\circ k})^*\mu & \text{on} \ N_p^{-\circ k}(\cA^\circ)\ \text{for all $k\ge1$},\\
\mu_0 & \text{on} \ C\setminus \bigcup_{k=1}^\infty N_p^{-\circ k}(\cA^\circ).
\end{array} \right.
\]
By definition the map $G$ leaves $\mu$ invariant.

Compare \cite[Lemma~9.23]{BF} to the following lemma where we include the case $\infty\in P_{N_p}$, the postcritical set of $N_p$.

\begin{lem}\label{Lem:Lem}
	The $G$ invariant $\mu$ defined above satisfies the hypothesis of David Integrability Theorem (Theorem~\ref{Thm:David_Integration}).
\end{lem}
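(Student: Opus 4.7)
The plan is to verify the area condition in \eqref{Eq:Beltrami}, equivalently, to show $\text{Area}\{z \in \Chat : K_\mu(z) > K\} < M e^{-\alpha K}$ for some $M, \alpha > 0$ and all large $K$, where area is spherical. First, one identifies where $\mu$ is non-trivial: on $\Chat \setminus \cA$ we have $\mu = \mu_0$, so $K_\mu = 1$ and this region contributes nothing for $K > 1$. On $\cA^\circ$, $\mu = \psi^* \hat{\mu}$ with $\psi$ conformal, so $K_\mu(z) = K_{\hat{\mu}}(\psi(z))$ pointwise. On each strictly preperiodic component of $N_p^{-\circ k}(\cA^\circ)$ for $k \geq 1$, the relation $\mu = (N_p^{\circ k})^* \mu|_{\cA^\circ}$ together with the holomorphicity of $N_p^{\circ k}$ gives $K_\mu(z) = K_{\hat{\mu}}(\psi \circ N_p^{\circ k}(z))$ almost everywhere. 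Setting $\hat{E}_K := \{w \in \D : K_{\hat{\mu}}(w) > K\}$, the bad set $E_K := \{z : K_\mu(z) > K\}$ satisfies $E_K \subset \bigcup_{k \geq 0} (N_p^{\circ k})^{-1}(\psi^{-1}(\hat{E}_K))$.

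By Lemma~\ref{Lem:Parabolic_Surgery_Disk}(c), $K_\phi$ is uniformly bounded off $\bigcup_m B^{-\circ m}(S'_B)$ and satisfies $K_\phi \asymp m$ on the quadrilateral $Q_m^B$; hence $\hat{E}_K \subset \bigcup_{m \geq cK} Q_m^B$ for some constant $c > 0$. Since $z = 1$ is a repelling fixed point of $B$ with multiplier $\lambda > 1$ and $Q_m^B = B^{-\circ m}(Q_0^B)$, the linearization at $z = 1$ yields $|Q_m^B| \asymp \lambda^{-2m}$ in Euclidean area, so that $\bigl|\bigcup_{m \geq cK} Q_m^B\bigr| \lesssim \lambda^{-2cK}$.

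It remains to transfer this estimate to spherical area on $\Chat$ and to sum over iterates. The Riemann map $\psi^{-1} : \D \to \cA^\circ$ sends $1$ to $\infty$, a repelling fixed point of $N_p$ with multiplier $d/(d-1) > 1$; combining the linearization at $\infty$ with Koebe distortion for $\psi^{-1}$ on suitable relatively compact pieces, one obtains $\text{Area}\bigl(\psi^{-1}\bigl(\bigcup_{m \geq cK} Q_m^B\bigr)\bigr) \lesssim \lambda^{-\alpha_0 K}$ for some $\alpha_0 > 0$. For $k \geq 1$, components of $N_p^{-\circ k}(\cA^\circ)$ whose closures avoid $\infty$ are handled by bounded Koebe distortion on univalent branches of $N_p^{-\circ k}$ chosen away from the finite postcritical set $P_{N_p}$; components accumulating at $\infty$ are again controlled by the repelling linearization there, which contracts the relevant sector geometrically in $k$. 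Summing the resulting geometric series over $k$ yields $\text{Area}(E_K) \lesssim \lambda^{-\alpha K}$, verifying the area condition.

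The main obstacle is precisely this last step. The classical argument in \cite[Lemma~9.23]{BF} for polynomials assumes that the repelling fixed point $\beta$ lies outside $P_p$, whereas in the Newton map setting $\infty$ may belong to $P_{N_p}$ since critical orbits can eventually land at $\infty$. The resolution is that $\infty$ remains a linearizably repelling fixed point of $N_p$, so even when postcritical data accumulates there, the chain of inverse branches of $N_p^{\circ k}$ through a neighborhood of $\infty$ contracts uniformly in spherical area, and the series over $k$ remains convergent.
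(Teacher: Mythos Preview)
Your overall strategy matches the paper's: control the bad set $\{K_\mu > m\}$ inside the sector $S_\infty = \psi^{-1}(S'_B)$ and in each of its preimage sectors, then sum. You also correctly flag the central obstacle, that $\infty$ may lie in $P_{N_p}$, which is precisely what the paper's remark before the proof isolates as the new content beyond \cite[Lemma~9.23]{BF}. The gap is that your resolution of this obstacle is not an argument.

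Your treatment of preimage components invokes ``univalent branches of $N_p^{-\circ k}$ chosen away from $P_{N_p}$'', but the bad set sits in an arbitrarily small neighborhood of $\infty$; when $\infty \in P_{N_p}$ there are no univalent inverse branches over that neighborhood, so this clause fails exactly where it is needed. The final paragraph then asserts that inverse branches ``contract uniformly in spherical area'', but that is the conclusion to be proved: if a critical point $c$ satisfies $N_p^{\circ j}(c) = \infty$, pullback through $c$ is a branched cover of local degree $k_0 > 1$, and the naive decay rate $\rho^{-2m}$ is degraded. The paper's fix is concrete. At each preimage $y$ of $\infty$ it lifts the repelling germ $N_p : U \to V$ through the (possibly branched) covering $N_p^{\circ k} : (V',y) \to (V,\infty)$ via Lemma~\ref{Lem:Lifting}, obtaining a univalent map $L$ fixing $y$ with $(L'(y))^{k_0} = N_p'(\infty) = \rho$, where $k_0 = \deg(N_p^{\circ k},y)$ takes only finitely many values bounded in terms of $d$. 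Koebe distortion applied to $L$ then gives $\text{Area}\{z \in S_y : K_\mu > m\} \le (C_2/\rho^{2m/(d-2)})\,\text{Area}\,S_y$ with a uniform $C_2$, after which the sum over all $y$ uses only $\sum_y \text{Area}\,S_y < \infty$ --- not a geometric series in $k$ as you suggest. Without the lift construction and the uniform control on $k_0$, the estimate does not close at the branched preimages.
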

\begin{xrem}
The proof we provide is slightly different than the one given in \cite[Lemma~9.23]{BF}. Our proof is applicable even if there is a critical orbit landing at the repelling fixed point at $\infty$ for PCF Newton maps of polynomials.
\end{xrem}
\begin{proof}[Proof of Lemma~\ref{Lem:Lem}]
	Let $V$ be a simply connected linearizable neighborhood of $\infty$. Let $U$ be a connected component of $N_p^{-1}(V)$ compactly contained in $V$. Let $S_\infty=\psi^{-1}(S'_B)$, where $\psi=\psi_{\cA^\circ}:\cA^\circ\to\D$ is the Riemann map defined above, and $S'_B$ is as in Lemma~\ref{Lem:Parabolic_Surgery_Disk}. Set $\rho={N^{\prime}_p(\infty)}=\frac{d}{d-1}>1$, and let $K_\mu$ be the real dilatation of $\mu$; by Koebe's Distortion Theorem and Lemma \ref{Lem:Parabolic_Surgery_Disk} there exists a constant $C_1>0$ such that for $m$ large enough,

\begin{equation}
\label{Eq:Koebe1}
	\text{Area}\left\{ z\in S_\infty\ |\ K_\mu>m \right\}\le \left(C_1/\rho^{2m}\right)\text{Area}\left\{ S_\infty\right\}.
\end{equation}
Moreover, for $m$ large enough we have 
\begin{equation}
\label{Eq:Subset}
\left\{ z\in S_\infty\ |\ K_\mu>m \right\}\subset U.
\end{equation} 
From now on we assume that $m$ is large enough that \eqref{Eq:Subset} is satisfied. If $N^{\circ k}_p(y)=\infty$ for some $y\in \C$ and $k\ge 1$, then let $S_y$ be a connected component of $N_p^{-\circ k}(S_\infty)$ with vertex at $y$. Here we need to be cautious: if the orbit of $y$ contains a critical point then we end up having more than one component with a shared vertex at $y$; in total, we have $\deg(N_p^{\circ k},y)$ components, the local degrees are uniformly bounded above by $d-2$, the number of free critical points of $N_p$. Let us fix $k\ge1$, and let $U'$ and $V'$ be connected components of $N_p^{-\circ k}(U)$ and $N_p^{-\circ k}(V)$ respectively such that $y\in U'\Subset V'$. 
The maps $N_p^{\circ k}:(V',y)\to(V,\infty)$ and $N_p^{\circ k}:(U',y)\to(U,\infty)$ are coverings, both ramified at $y$ if the orbit of $y$ contains a critical point of $N_p$. As $N_p:U\to V$ fixes $\infty$, its lift is a repelling germ. To obtain the lift, we fix base points $w_0$ for $U\setminus \{\infty\}$ and its image $N_p(w_0)$ for $V\setminus\{\infty\}$ such that both $w_0$ and $N_p(w_0)$ belong to $S_\infty$. Pick a component $S'_y$ of $N_p^{-\circ k}(S_\infty\cap U)$ that is included in $U'$. Let $y_0$ and $z_0$ be base points for $U'\setminus \{y\}$ and  $V'\setminus \{y\}$ respectively such that  $y_0\in N_p^{-\circ k}(w_0)\cap S'_y$ and $z_0\in N_p^{-\circ k}(N_p(w_0))\cap S'_y$. Once the component $S'_y$ is fixed there is only one choice for $y_0$ and $z_0$. As $N_p$ is one-to-one on $U$, the induced homeomorphisms at the level of fundamental groups of involved domains are isomorphic, and thus we can apply Lemma \ref{Lem:Lifting} to the pair $N_p\circ N^{\circ k}_p: \left(U'\setminus \{y\},y_0\right)\to \left(V\setminus \{\infty\},N_p(w_0)\right)$ and $N^{\circ k}_p: \left(V'\setminus \{y\},z_0\right)\to \left(V\setminus \{\infty\},N_p(w_0)\right)$, obtaining a lift \[L:(U'\setminus \{y\},y_0)\to (V'\setminus \{y\},z_0)\] with $L(y_0)=z_0$ such that the following diagram commutes:

  \[\begin{diagram}[heads=LaTeX,l>=3em,PS]
  \left(U'\setminus \{y\},y_0\right) &\rTo^{L} & \left(V'\setminus \{y\},z_0\right)\\
  \dTo^{N_p^{\circ k}} & & \dTo_{N_p^{\circ k}}\\
  \left(U\setminus \{\infty\},w_0\right) &\rTo^{N_p} & \left(V\setminus \{\infty\},N_p(w_0)\right)
  \end{diagram}\]
The lift $L$ is a one-to-one conformal map and satisfies $N_p^{\circ k}\circ L=N_p^{\circ (k+1)}$. We extend $L$ to $y$ by Riemann removability theorem, and it is easy to see that $L(y)=y$. Different choices of $S'_y$, and thus different base points, produce lifts that differ from each other by a composition of a deck transformation of the covering map $N^{\circ k}_p:  \left(V'\setminus \{y\}\right)\to \left(V\setminus \{\infty\}\right)$. Easy calculation shows that $(L'(y))^{k_0}=N'_p(\infty)=\rho$, where $k_0= \deg(N_p^{\circ k},y)\le d-2$. By Koebe's Distortion Theorem and Lemma \ref{Lem:Parabolic_Surgery_Disk}, there exists a constant $C_2>0$ such that
\begin{align}
 \text{Area}\left\{ z\in S_y\ |\ K_\mu>m \right\}&\le \left(C_2/\rho^{2m/k_0}\right)\text{Area }S_y \nonumber\\
\label{Eq:Koebe2}&\le \left(C_2/\rho^{2m/(d-2)}\right)\text{Area }S_y,
\end{align} 
	for all preimages $y\in\C$ of $\infty$. The constant $C_2$ depends only on the multiplier (distortion of $L$) at the fixed point $y$, thus there are only finite number of possibilities (at most $d-2$). Let us denote the maximum of these constants by $C_2$ again. From \eqref{Eq:Koebe1} and \eqref{Eq:Koebe2}, we deduce that for $m$ large enough,
  \begin{align*}
  	\text{Area}\left\{ z\in\Chat\ |\ K_\mu>m \right\} & = \text{Area}\left\{ z\in S_\infty\ |\ K_\mu>m \right\}\\
  	&+\sum\limits_{k\ge 1}\sum\limits_{N_p^{\circ k}(y)=\infty}\text{Area}\left\{ z\in S_y,\  K_\mu>m \right\}\\
& \le   \left(C_1/\rho^{2m}\right)\text{Area}\left\{ S_\infty\right\}\\
&+ \sum\limits_{k\ge 1}\sum\limits_{N_p^{\circ k}(y)=\infty}\left(C_2/\rho^{2m/(d-2)}\right)\text{Area }S_y. \\
 	\end{align*}
 In the above summands in $y\in\C$ with $N_p^{\circ k}(y)=\infty$ all $\deg(N_p,y)$ different $S_y$ are included if $y$ is a critical point. Hence,
 \[\text{Area}\left\{ z\in\Chat\ |\ K_\mu>m \right\}\le\left(C/\rho^{2m/(d-2)}\right)\text{Area}\ X=C'e^{-2m\ln\rho/(d-2)},
  \]
where $C=\max(C_1,C_2)$, $C'=C\text{Area}X$, and $X=S_\infty\cup \bigcup_{k\ge1,N_p^{\circ k}(y)=\infty}S_y$ is the total domain under the consideration. Since the area of $X$ depends on which metric we are using, if we want to use the Euclidean metric then we have to make sure that $\infty$ belongs to the interior of the complement of $X$ in $\Chat$. This can be done by conjugating $N_p$ by a M\"obius map in the beginning of the whole construction so that the set $X$ stays bounded with a finite area. For instance, by a suitable conjugation we can put $\infty$ into some immediate basin. Alternatively, we can use a spherical metric in $\Chat$; then the area of $X$ is finite again. This proves the estimate on $K_\mu$ and finishes the proof of the lemma.
\end{proof}

The David Integrability Theorem (Theorem~\ref{Thm:David_Integration}) asserts the existence of a map $\phi$ which integrates $\mu$. Now we obtain a holomorphic function from of the construction.

\begin{lem}[Straightening]\cite[Lemma 9.25]{BF}
	\label{Lem:Straightening_Haissinsky}
	If there is a David homeomorphism $\phi:\Chat\to\Chat$ such that the equation $\partial_{\bar{z}}\psi=\mu_\phi \partial_z \psi$ has a local solution, unique up to post-composition by a conformal map, and such that $G^*\mu_\phi=\mu_\phi$ a.e., then $R:=\phi\circ G\circ \phi^{-1}$ is a rational function.
\end{lem}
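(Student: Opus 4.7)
The plan is to show that $R=\phi\circ G\circ\phi^{-1}$ is $1$-quasiconformal away from a finite exceptional set, conclude that $R$ is holomorphic off that set by Weyl-type reasoning inside the David framework, and then remove the singularities to get a globally holomorphic self-branched-covering of $\Chat$, hence a rational function of degree $d=\deg G$.

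First I would observe that $R$ is a priori a continuous branched covering of $\Chat$ of topological degree $d$ (as a conjugate of $G$ by the homeomorphism $\phi$). Fix a point $z_0\in\Chat$ which is not a critical value of $R$, and choose a simply connected neighbourhood $V$ of $z_0$ so small that $R^{-1}(V)$ consists of $d$ components, each mapping homeomorphically onto $V$. Pick one such component $V_1$, set $y_0=\phi^{-1}(V_1\ni R^{-1}(z_0))$, and let $U=\phi^{-1}(V_1)$; by construction $G:U\to\phi^{-1}(V)$ is a homeomorphism onto its image. I now consider the two David homeomorphisms defined on $U$, namely $\phi|_U$ and $\phi\circ G|_U$, and compute their Beltrami coefficients.

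The map $\phi|_U$ integrates $\mu_\phi|_U$ by definition. For $\phi\circ G|_U$, away from the finitely many arcs where the topological surgery was performed (each a piecewise $C^1$ quasiarc), $G$ is either holomorphic or piecewise smooth with bounded dilatation, and the chain rule for Beltrami differentials combined with the hypothesis $G^*\mu_\phi=\mu_\phi$ a.e.\ gives $\mu_{\phi\circ G}=\mu_\phi$ a.e.\ on $U$. Across the surgery quasiarcs, which have measure zero and are quasiconformally removable (and hence $L^{1,1}$-removable for the David class by gluing of David maps along quasiarcs, as recalled in the excerpt), the identity $\mu_{\phi\circ G}=\mu_\phi$ persists a.e. Thus $\phi|_U$ and $\phi\circ G|_U$ are two David homeomorphisms integrating the same David--Beltrami form on $U$. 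By the uniqueness clause of the David Integrability Theorem (Theorem~\ref{Thm:David_Integration}), they differ by post-composition with a conformal map: there exists a conformal $\eta:V_1\to V$ with $\phi\circ G=\eta\circ\phi$ on $U$. Rewriting yields $R=\eta$ on $V$, so $R$ is holomorphic on $V$.

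Covering $\Chat$ minus the finite set of critical values of $R$ by such neighbourhoods, we conclude that $R$ is holomorphic on the complement of this finite set. Since $R$ is continuous everywhere, Riemann's removable singularity theorem promotes $R$ to a globally holomorphic map $\Chat\to\Chat$, which is therefore a rational function.

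The main obstacle I expect is the Beltrami bookkeeping in the second paragraph: one must be careful that the surgery arcs where $G$ fails to be $C^1$ do not prevent the identity $\mu_{\phi\circ G}=\mu_\phi$ from holding almost everywhere, and that $\phi\circ G$ indeed belongs to the David class on $U$ (not merely the topological class). Both issues are handled by the removability of quasiarcs for David maps and by the fact that $G$ is QC off its critical set, so that composition with the David map $\phi$ remains David (using Astala's area distortion estimates as noted in the excerpt); once this is in place, the uniqueness in Theorem~\ref{Thm:David_Integration} does all the real work.
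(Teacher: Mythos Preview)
Your proof is correct and follows essentially the same route as the paper's: away from the critical points of $G$, use $G^*\mu_\phi=\mu_\phi$ together with the uniqueness hypothesis to see that $\phi\circ G$ and $\phi$ differ by post-composition with a conformal map, hence $R$ is locally conformal there, and then apply Riemann's removable singularity theorem to the remaining finite set. Two small write-up slips to fix: the identity $R=\eta$ holds on $V_1$ (the sheet in the source of $R$), not on $V$, and consequently the open cover you build omits the finitely many critical \emph{points} of $R$, not its critical values.
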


\begin{proof}%[Proof of Lemma~\ref{Lem:Straightening_Haissinsky}]
	Indeed, on every disk where $G$ is injective, $\phi\circ G$ has the same Beltrami coefficient as $\phi$; hence, there exists a conformal map, say $R$, such that $\phi\circ G=R\circ \phi$, which follows from uniqueness of the solution. There is a discrete set of points, which is the set of critical points of $G$, where $G$ is not locally injective; hence by Riemann removability theorem, $R$ is a rational function. If our map was a polynomial then it can be shown that the result is again a polynomial of the same degree.
\end{proof}

%%%%Proof of Main Result
\section{Proof of Main Theorem~\ref{Thm:Parabolic_Surgery_Newton}}
In this section, relying on the surgery construction of previous section, we prove our main result.
\begin{proof}[Proof of Main Theorem~\ref{Thm:Parabolic_Surgery_Newton}] Let $N_p$ be a postcritically finite Newton map of degree $d\ge 3$, and $\Delta^+_n$ be its marked channel diagram with $1\le n\le d$. For all $1\le j\le n$, let $\cA(\xi_j)$ be the marked basins of superattracting fixed points $\xi_j$. Applying Lemma~\ref{Lem:GettingPCM} to $N_p$ through its marked basins and we obtain a Newton map $N_{\hat p}$ and a quasiconformal map $\phi_0$ such that $\phi_0\circ N_p=N_{\hat p}\circ\phi_0$ except on a compact set. Every marked immediate basin of $N_{\hat p}$ becomes (non-super)attracting with a single critical point in it, and $N_{\hat p}$ is postcritically minimal on these marked basins, meaning that its critical orbits are in minimal critical orbit relations. Note that during the surgery construction of the previous section, we denoted the result again by $N_p$. Now we need to go back to the original notation, $\phi_0$ relates $N_p$ and $N_{\hat p}$. 

The last lemma of the previous section yields a David homeomorphism $\phi_1$ and $R$ such that $\phi_1\circ G=R\circ \phi_1$, where $G$ was a topological model. By construction, $\infty$ is a parabolic fixed point of $R$ with the multiplicity $+1$, and finite fixed points are superattracting, since $\phi_1$ is conformal away from marked basins; hence $R$ is a Newton map, say $N_{\tilde p e^{\tilde q}}$. All finite fixed points of $R$ are superattracting and there are $d-n$ of them. Thus $\infty$ is a parabolic fixed point with multiplicity $n+1$, implying that $\deg(\tilde q)=n$. Conjugation preserves dynamics, and in fact by construction $G$ satisfies minimal critical orbit relations, hence $R=N_{\tilde p e^{\tilde q}}$ is a \emph{postcritically minimal} Newton map. The homeomorphisms $\phi_0$ and $\phi_1$ are conformal except on the corresponding marked basins. Set $\phi=\phi_1\circ\phi_0$. Finally, we have $\phi\circ N_p=N_{\tilde p e^{\tilde q}}\circ\phi$ away from marked immediate basins of $N_p$. Moreover, $\phi$ is conformal on the interior of the complement of marked full basins of $N_p$, unmarked components of the Fatou set. Moreover, by the construction of the previous section, the marked invariant accesses of the marked channel diagram $\Delta^+_n$ of $N_p$ became dynamical accesses of the parabolic basin of $\infty$ for $N_{\tilde p e^{\tilde q}}$. Thus, all conditions of Main Theorem~\ref{Thm:Parabolic_Surgery_Newton} are fulfilled for $\phi$ and $N_{\tilde p e^{\tilde q}}$, finishing the proof.
\end{proof}

\subsection*{Acknowledgements}
The author thanks Dierk Schleicher and the dynamics group in Bremen, especially Russell Lodge and Sabyasachi Mukherjee, for their comments that helped to improve this paper. The author would like to thank an anonymous referee for constructive comments and suggestion for improvement.


\begin{thebibliography}{HD}

%%%%%%% To ease editing, use normal size:

\normalsize
\baselineskip=17pt


%%%%%%%%%%%%%%%

\bibitem[A94]{A} K. Astala, {\em Area distortion of quasiconformal mappings}, Acta Math. {\bf 173}, (1994), 37--60.

\bibitem[BFJK14]{BFJK14} K. Bara\'nski, N. Fagella, X. Jarque, and B. Karpi\'nska, {\em On the connectivity of the Julia sets of meromorphic functions}, Invent. Math., {\bf 198}, (2014), 591--636.

\bibitem[BFJK17]{BFJK17} K. Bara\'nski, N. Fagella, X. Jarque, and B. Karpi\'nska, {\em Accesses to infinity from Fatou components}, Trans. Amer. Math. Soc. {\bf 369}, (2017), 1835--1867

\bibitem[BF14]{BF} B. Branner, N. Fagella, {\em Quasiconformal surgery in holomorphic dynamics}, Cambridge University Press, (2014).

\bibitem[Cil04]{Cil} F. Cilingir, {\em On infinite area for complex exponential function}, Chaos Solitons Fractals, no. 5, {\bf 22}, (2004), 1189--1198.

\bibitem[CJ11]{CJ} F. Cilingir, X. Jarque, {\em On Newton's method applied to real polynomials}, J. Difference Equ. Appl. no. 6, {\bf 18}, (2011), 1067--1076.


\bibitem[CT11]{CT1} G. Cui, L. Tan, {\em A characterization of hyperbolic rational maps}, Invent. Math. no. 3, {\bf 183}, (2011), 451--516.

\bibitem[CT]{CT2} G. Cui, L. Tan, {\em Hyperbolic-parabolic deformations of rational maps}, arXiv:1501.01385 (2015).


\bibitem[D98]{David} G. David, {\em Solutions de lequation de Beltrami avec $||\mu||_\infty$ = 1}, Ann. Acad. Sci. Fenn. Math. {\bf 13}, (1988), 25--70.

\bibitem[DH85]{DH} A. Douady, J. H. Hubbard, {\em On the dynamics of polynomial-like mappings}, Ann. Sc. \'Ecole. Norm. Sup. (4), {\bf 18}, (1985), 287--343.

\bibitem[DH]{DH1} A. Douady, J. H. Hubbard, {\em Exploring the Mandelbrot set. The Orsay Notes}.

\bibitem[GM93]{GM} L. R. Goldberg, J. Milnor, {\em Fixed points of polynomial maps: Part II. Fixed point portraits.}, Ann. Sci. \'Ecole. Norm. Sup. (4), {\bf 26}, (1993), 51-98.

\bibitem[H02]{Hatch} A. Hatcher, {\em Algebraic Topology}, Cambridge Univ. Press, 2002.

\bibitem[Ha98]{Ha} P. Ha\"issinsky, {\em Chirurgie parabolique}, C. R. Math. Acad. Sci. Paris, {\bf 327}, (1998), 195--198.

\bibitem[HSS01]{HSS} J. Hubbard, D. Schleicher, and S. Sutherland, {\em How to find all roots of complex polynomials by Newton's method.} Invent. Math. {\bf 146}, (2001), 1--33.

\bibitem[Har99]{Haruta} M. Haruta, {\em Newtons method on the complex exponential function}, Trans. Amer. Math. Soc. no. 6, {\bf 351}, (1999), 2499--2513.

\bibitem[Kaw05]{Kaw1} T. Kawahira, {\em Note on dynamically stable perturbations of parabolics}, Analytical Mathematical Institute, Kyoto University, {\bf 1447}, 90--107, (2005).

\bibitem[Kaw06]{Kaw2} T. Kawahira, {\em Semiconjugacies between the Julia sets of geometrically finite rational maps II}, In Dynamics on the Riemann Sphere, The Bodil Branner Festschrift, Eur. Math. Soc. (2006), 131--138.

\bibitem[LMS1]{LMS1} R. Lodge, Y. Mikulich, and D. Schleicher, {\em Combinatorial properties of Newton maps}, arXiv:1510.02761 (2015).

\bibitem[LMS2]{LMS2} R. Lodge, Y. Mikulich, and D. Schleicher, {\em A classification of postcritically finite Newton maps}, arXiv:1510.02771 (2015).

\bibitem[Ma15]{Ma15} K. Mamayusupov, {\em On Postcritically Minimal Newton maps}, PhD thesis, (2015).

\bibitem[Ma]{Ma} K. Mamayusupov, {\em A characterization of postcritically minimal Newton maps of complex exponential functions}, Ergodic Theory Dynam. Systems.
doi.org/10.1017/etds.2017.137 

\bibitem[MS06]{MS} S. Mayer, D. Schleicher {\em Immediate and virtual basins of Newton's method for entire functions}, Ann. Inst. Fourier (Grenoble), no. 2, {\bf 56}, (2006), 325--336.

\bibitem[McM86]{McM3} C. McMullen, {\em Automorphisms of rational maps}, In Holomorphic Functions and Moduli. MSRI Publications, (1986), 31--58.

\bibitem[Mil06]{Mdyn} J. Milnor, {\em Dynamics in one complex variable}, 3rd edition. Annals of Mathematics Studies, {\bf 160}. Princeton University Press, Princeton, NJ, (2006).

\bibitem[Mil12]{Hypcom} J. Milnor, {\em Hyperbolic components.} In Conformal Dynamics and Hyperbolic Geometry, Conference in honor of Linda Keen, Contemporary Mathematics {vol. 573} : F. Bonahon, R. L. Devaney, F. P. Gardiner and D. \u Sari\'c, editors, (2012), 183--232.

\bibitem[Prz89]{Przytycki} F. Przytycki, {\em Remarks on the simple connectedness of basins of sinks for iterations of rational maps}, Dynamical Systems and Ergodic Theory, K. Krzy\'zewski (ed.), Banach Center Publications, Warsaw, Polish Sci. Publ. {\bf  23}, (1989), 229--235.

\bibitem[RS07]{RS} J. R\"uckert, D. Schleicher, {\em On Newton's method for entire functions}. J. Lond. Math. Soc. no. 3, {\bf  75},  (2007), 659--676.

\bibitem[Shi09]{Shishikura} M. Shishikura, {\em The connectivity of the Julia set of rational maps and fixed points}, In Complex Dynamics, Families and Friends: D. Schleicher, editor, A. K. Peters Ltd., Wellesley, MA, (2009), 257--276.

\bibitem[Tuk91]{Tukia} P. Tukia, {\em Compactness properties of $\mu$-homeomorphisms}, Ann. Acad. Sci. Fenn. Math. {\bf 16}, (1991), 47--69.

\bibitem[Zak04]{Zak} S. Zakeri, {\em David maps and Hausdorff dimension}, Ann. Acad. Sci. Fenn. Math. no. 1, {\bf 29}, (2004), 121--138.
\end{thebibliography}
\end{document}